\newcommand{\taa}{{\mathop{\rule{0pt}{6pt}\smash{\Delta}}\limits^{\circ}}}
\newcommand{\AAA}{{\mathop{\rule{0pt}{6pt}\smash{A}}\limits^{\circ}}}
\newcommand{\DD}{{\mathop{\rule{0pt}{6pt}\smash{D}}\limits^{\circ}}}
\newcommand{\CC}{{\mathop{\rule{0pt}{6pt}\smash{\mathcal{C}}}\limits^{\circ}}}
\newcommand{\rrvert}{\vert}
\newcommand{\llvert}{\vert}
\newtheorem{theo}{Theorem}
\newtheorem{prop}[theo]{Proposition}
\newtheorem{cor}[theo]{Corollary}
\newtheorem{lem}[theo]{Lemma}
\begin{document}
\begin{frontmatter}

\title{A Curie--Weiss model of self-organized criticality}
\runtitle{A Curie--Weiss model of SOC}

\begin{aug}
\author[A]{\fnms{Rapha\"el}~\snm{Cerf}\ead[label=e1]{rcerf@math.u-psud.fr}}
\and
\author[A]{\fnms{Matthias}~\snm{Gorny}\corref{}\ead[label=e2]{matthias.gorny@math.u-psud.fr}}
\runauthor{R. Cerf and M. Gorny}
\affiliation{Universit\'e Paris-Sud}
\address[A]{D\'epartement de Math\'ematiques\\
B\^atiment 425\\
Facult\'e des Sciences d'Orsay\\
Universit\'e Paris-Sud\\
F-91405 Orsay Cedex\\
\printead{e1}\\
\phantom{E-mail:\ }\printead*{e2}}
\end{aug}

%
\received{\smonth{6} \syear{2013}}
%
\revised{\smonth{1} \syear{2014}}

%
\begin{abstract}
We try to design a simple model exhibiting self-organized criticality,
which is amenable to a rigorous mathematical analysis. To this end, we
modify the generalized Ising Curie--Weiss model by implementing an
automatic control of the inverse temperature. For a class of symmetric
distributions whose density satisfies some integrability conditions, we
prove that the sum $S_{n}$ of the random variables behaves as in the
typical critical generalized Ising Curie--Weiss model. The fluctuations
are of order $n^{3/4}$, and the limiting law is $C \exp(-\lambda
x^{4})\,dx$ where $C$ and $\lambda$ are suitable positive constants.
\end{abstract}

%
\begin{keyword}[class=AMS]
\kwd{60F05}
\kwd{60K35}
\end{keyword}
\begin{keyword}
\kwd{Ising Curie--Weiss}
\kwd{self-organized criticality}
\kwd{Laplace's method}
\end{keyword}
\end{frontmatter}

\section{Introduction}

In their famous article~\cite{BTW},  Bak, Tang and
Wiesenfeld showed that certain complex systems are naturally attracted
by critical points, without any external intervention. The
amplification of small internal fluctuations can lead to a critical
state and cause a chain reaction leading to a radical change of the
system behavior. These systems exhibit the phenomenon of self-organized
criticality (SOC). Although there is no universal SOC theory, it can be
well understood with the archetype of SOC: the sandpile model, first
introduced in~\cite{BTW}. We consider a pile of sand and the constant
drop of new sand grains, which randomly slide down the slope of sand.
We observe local avalanches with different and unpredictable sizes
which are not proportional to the input. Such phenomenon can be
observed in nature (e.g., forest fires, earthquakes, species evolution).

In general SOC can be observed empirically or simulated on a computer
in various models. However, the mathematical analysis of these models
turns out to be extremely difficult, even for the sandpile model whose
definition is yet simple. Self-organized criticality has been reviewed
in recent works \cite{Aschwanden,Bak,Dhar,Pruessner,Turcotte}. Other
challenging models are the models for forest fires~\cite{RT}, which are
built with the help of percolation process. Some simple models of
evolutions also lead to critical behaviors~\cite{BDF}.

Our goal here is to design a model exhibiting self-organized
criticality, which is as simple as possible, and which is amenable to a
rigorous mathematical analysis. The simplest models exhibiting SOC are
obtained by forcing standard critical transitions into a self-organized
state; see Section~15.4.2 of~\cite{Sornette}. The idea is to start with
a model presenting a phase transition and to create a feedback from the
configuration to the control parameters in order to converge toward a
critical point. The most widely studied model in statistical mechanics,
which exhibits a phase transition and presents critical states, is the
Ising model. Its mean field version is called the Ising Curie--Weiss
model; see Sections~IV.4 and~V.9~of~\cite{Ellis}. It has been extended
to real-valued spins by Ellis and Newman~\cite{EN}, in the so called
generalized Ising Curie--Weiss model. This model is our starting point,
and we will modify it in order to build a system of interacting random
variables, which exhibits a phenomenon of SOC.

Let us first recall the definition and some results on the generalized
Ising Curie--Weiss model. Let $\rho$ be a symmetric probability measure
on $\mathbb{R}$ with positive variance $\sigma^{2}$ and such that
\[
\forall t \geq 0 \qquad\int_{\mathbb{R}}\exp\bigl(tx^{2}
\bigr)\,d\rho (x)<\infty.
\]
The generalized Ising Curie--Weiss model associated to $\rho$ and the
inverse temperature $\beta>0$ is defined through an infinite triangular\vspace*{1pt}
array of real-valued random variables $(X_{n}^{k})_{1\leq k \leq n}$
such that, for all $n \geq1$, $(X^{1}_{n},\ldots,X^{n}_{n})$ has the
distribution
\[
d \mu_{n,\rho,\beta}(x_{1},\ldots,x_{n})=\frac{1}{Z_{n}(\beta)}
\exp \biggl(\frac
{\beta}{2} \frac{(x_{1}+\cdots+x_{n})^{2}}{n} \biggr) \prod
_{i=1}^{n} \,d\rho (x_{i}),
\]
where $Z_{n}(\beta)$ is a normalization. For any $n \geq1$, we set
$S_{n}=X^{1}_{n}+\cdots+X^{n}_{n}$. When $\rho=(\delta_{-1}+\delta
_{1})/2$, we
recover the classical Ising Curie--Weiss model.

We denote by $L$ the log-Laplace of $\rho$ (see Appendix~\ref{appA}). Ellis and
Eisele have shown in~\cite{EE} that, if $L^{(3)}(t) \leq0$ for any
$t\geq0$, then there exists a map $m$ which is null on $]0,1/\sigma^{2}]$,
real analytic and positive on $]1/\sigma^{2},+\infty[$ and such that
\[
\frac{S_{n}}{n} \mathop{\longrightarrow}^{\mathcal{L}}_{n \to\infty
}\cases{
\delta_{0}, & \quad\mbox{if }$\beta\leq1/\sigma^{2}$,\vspace *{3pt}
\cr
\displaystyle\tfrac{1}{2}(\delta_{-m(\beta)}+\delta_{m(\beta)}), & \quad\mbox{if }$\beta>1/
\sigma^{2}$.}
\]
The point $1/\sigma^{2}$ is a critical value, and the function $m$ cannot
be extended analytically around $1/\sigma^{2}$. The main result
of~\cite{EN} states that, if $\beta<1/\sigma^{2}$, then, under~$\mu_{n,\rho,\beta}$,
\[
\frac{S_{n}}{\sqrt{n}} \mathop{\longrightarrow}^{\mathcal{L}}_{n
\to\infty}
\mathcal{N} \biggl(0,\frac{\sigma^{2}}{1-\beta\sigma^{2}} \biggr).
\]
If $\beta=1/\sigma^{2}$, then there exists $k \in\mathbb
{N}\setminus\{0,1\}$ and $\lambda
>0$ such that, under~$\mu_{n,\rho,\beta}$,
\[
\frac{S_{n}}{n^{1-1/2k}} \mathop{\longrightarrow}^{\mathcal{L}}_{n \to
\infty}
C_{k,\lambda} \exp \biggl(-\lambda\frac
{s^{2k}}{(2k)!} \biggr)\,ds,
\]
where $C_{k,\lambda}$ is a normalization. This is a consequence of
Theorem~2.1 of~\cite{EN} and some properties of $m$ explained in~\cite{EE} implying that the function
$s \longmapsto L(s\sqrt{\beta})-s^{2}/2$
has a unique maximum at~$0$ whenever $\beta\leq1/\sigma^{2}$; see
Section~V.2 of~\cite{M2GOR} for the details.

We will transform the previous probability distribution in order to
obtain a model which presents a phenomenon of self-organized
criticality, that is, a model which evolves toward the critical state
$\beta=1/\sigma^{2}$ of the previous model. More precisely, the critical
generalized Ising Curie--Weiss model is the model where
$(X^{1}_{n},\ldots,X^{n}_{n})$ has the distribution
\[
\frac{1}{Z_{n}} \exp \biggl(\frac{(x_{1}+\cdots+x_{n})^{2}}{2n\sigma
^{2}} \biggr) \prod
_{i=1}^{n} \,d\rho(x_{i}).
\]
We wish to build a model which converges to a critical state for every
distribution~$\rho$ and which does not rely on any specific a priori
information on $\rho$. We search an automatic control of the inverse
temperature $\beta$, which would be a function of the random variables in
the model, so that when $n$ goes to $+ \infty$, $\beta$ converges toward
the critical value of the model. We start with the following
observation: if $(Y_{n})_{n \geq1}$ is a sequence of independent
random variables with identical distribution $\rho$, then, by the law of
large numbers,
\[
\frac{Y_{1}^{2}+\cdots+Y_{n}^{2}}{n} \mathop{\longrightarrow}_{n \to
\infty}
\sigma^{2}\qquad\mbox{a.s.}
\]
This convergence provides us with an estimator of $1/\sigma^2$. If we
believe that a similar convergence holds in the generalized Ising
Curie--Weiss model, then we are tempted to
\textit{replace  $\beta$ by $n (x_{1}^{2}+\cdots+x_{n}^{2})^{-1}$} in the
distribution
\[
\frac{1}{Z_{n}} \exp \biggl(\frac{\beta}{2}\frac{(x_{1}+\cdots
+x_{n})^{2}}{n} \biggr)
\prod_{i=1}^{n} \,d\rho(x_{i}).
\]
Hence the model we consider in this paper is given by the distribution
\[
\frac{1}{Z_{n}} \exp \biggl(\frac{1}{2}\frac{(x_{1}+\cdots
+x_{n})^{2}}{x_{1}^{2}+\cdots+x_{n}^{2}} \biggr)\prod
_{i=1}^{n} \,d\rho (x_{i}).
\]
The previous considerations suggest that this model should evolve
spontaneously toward a critical state. We will prove rigorously that
our model indeed exhibits a phenomenon of self-organized criticality.
However, our model is a toy model which is certainly much less complex
than other famous fundamental models of SOC like the sandpile model.

Our main result (Theorem~\ref{theoFluctuations}) states that if $\rho$
has an even density satisfying some integrability condition, then,
asymptotically, the sum $S_{n}$ of the random variables behaves as in
the typical critical generalized Ising Curie--Weiss model: if $\mu_{4}$
denotes the fourth moment of $\rho$, then
\[
\frac{\mu_{4}^{1/4}S_{n}}{\sigma^{2}n^{3/4}} \mathop{\longrightarrow }^{\mathcal{L}}_{n \to\infty} \biggl(
\frac{4}{3} \biggr)^{1/4}\Gamma \biggl(\frac
{1}{4}
\biggr)^{-1} \exp \biggl(-\frac{s^{4}}{12} \biggr)\,ds.
\]

This fluctuation result shows that our model is a self-organized model
exhibiting critical behavior. Indeed it has the same behavior as the
critical generalized Ising Curie--Weiss model, and by construction, it
does not depend on any external parameter. In this sense, we can
conclude that this is a Curie--Weiss model of self-organized criticality.

Our result presents an unexpected universal feature. For any
distribution~$\rho$, which has an even density satisfying some
integrability hypothesis, the fluctuations of $S_{n}$ are of order
$n^{3/4}$. This is in contrast to the situation in the critical
generalized Ising Curie--Weiss model: at the critical point, the
fluctuations are of order $n^{1-1/2k}$, where $k$ depends on the
distribution $\rho$.
We stress also that our integrability conditions on $\rho$ are weaker
than those
of~\cite{EN}. For instance, our result holds for any centered Gaussian
measure on $\mathbb{R}$. The Gaussian case of our model can be handled
with the
help of an explicit computation~\cite{GORGaussCase}.

The main new technical ingredient of the proof is the following
inequality. Let $Z$ be a random variable with distribution $\rho$, and
let $I$ denote the
Cram\'er transform of $(Z,Z^{2})$, given by
\[
\forall(x,y) \in\mathbb{R}^{2} \qquad I(x,y)=\sup
_{(u,v) \in
\mathbb{R}^{2}} \biggl\{ xu+yv-\ln\int_{\mathbb{R}}e^{uz+vz^{2}}
\,d\rho(z) \biggr\}.
\]
If $\rho$ is symmetric and there exists $v>0$ such that $E(\exp
(vZ^2))<+\infty$, then
\[
\forall(x,y) \in\mathbb{R}^{2} \qquad I(x,y) \geq \frac{x^2}{2y},
\]
and the equality holds only at $(0,\sigma^2)$. We explain in the
heuristics at the end of Section~\ref{theoCV} why this inequality is
crucial to the proof of our main results.

In Section~\ref{model} we properly define our model. We state our main
results and the strategy for proving them in Section~\ref{theoCV}. Next
we split the proofs in the remaining Sections \ref{MinimaI-F}--\ref{ProofFluctuations}.
In the \hyperref[appA]{Appendix}, we recall some generalities on
the Cram\'er transform and large deviations.

\section{The model}
\label{model}

Let $\rho$ be a probability measure on $\mathbb{R}$, which is not the
Dirac mass
at 0. We consider an infinite triangular array of real-valued random
variables $(X_{n}^{k})_{1\leq k \leq n}$ such that for all $n \geq1$,
$(X^{1}_{n},\ldots,X^{n}_{n})$ has the distribution $\tilde{\mu}_{n,\rho}$, where
\[
d\tilde{\mu}_{n,\rho}(x_{1},\ldots,x_{n})=
\frac{1}{Z_{n}}\exp \biggl(\frac{1}{2}\frac{(x_{1}+\cdots+x_{n})^{2}}{x_{1}^{2}+\cdots
+x_{n}^{2}} \biggr)
\mathbh{1}_{\{x_{1}^{2}+\cdots+x_{n}^{2}>0\}} \prod_{i=1}^{n}\,d
\rho(x_{i}),
\]
with
\[
Z_{n}=\int_{\mathbb{R}^{n}}\exp \biggl(\frac{1}{2}
\frac
{(x_{1}+\cdots
+x_{n})^{2}}{x_{1}^{2}+\cdots+x_{n}^{2}} \biggr)\mathbh{1}_{\{
x_{1}^{2}+\cdots +x_{n}^{2}>0\}} \prod
_{i=1}^{n}\,d\rho(x_{i}).
\]
We define $S_{n}=X^{1}_{n}+\cdots+X^{n}_{n}$ and
$T_{n}=(X^{1}_{n})^{2}+\cdots+(X^{n}_{n})^{2}$.

The indicator function in the density of the distribution $\tilde
{\mu}_{n,\rho}$ helps to avoid any problem of definition if $\rho(\{
0\})$
is positive, since, if $\rho(\{0\})>0$, the event $\{x_{1}^{2}+\cdots
+x_{n}^{2}=0\}$ may occur with positive probability. We notice that,
unlike the generalized Ising Curie--Weiss model, our model is defined
for any probability measure. Indeed $x \longmapsto x^{2}$ is a convex
function, and therefore
\[
\forall(x_{1},\ldots,x_{n}) \in\mathbb{R}^{n}
\qquad \Biggl(\sum_{i=1}^{n}x_{i}
\Biggr)^{2}=n^{2} \Biggl(\sum_{i=1}^{n}
\frac
{x_{i}}{n} \Biggr)^{2}\leq n \sum_{i=1}^{n}
x_{i}^{2}.
\]
Thus for any $n \geq1$, $1 \leq Z_{n} \leq e^{n/2}< +\infty$.

If we choose $\rho=(\delta_{-1}+\delta_{1})/2$, we obtain the
classical Ising
Curie--Weiss model at the critical value.

\section{Convergence theorems}
\label{theoCV}

We state here our main results. By the classical law of large numbers,
if $\rho$ is centered and has variance $\sigma^{2}$, then under $\rho
^{\otimes
n}$, $(S_{n}/n,T_{n}/n)$ converges in probability toward $(0,\sigma^{2})$.
The next theorem shows that under\vspace*{1pt} the law~$\tilde{\mu}_{n,\rho}$,
given certain conditions, $(S_{n}/n,T_{n}/n)$ also converges in
probability to~$(0,\sigma^{2})$.

\begin{theo}\label{theoCVproba}
Let $\rho$ be a symmetric probability measure on $\mathbb{R}$ with
positive variance $\sigma^{2}$ and such that
\[
\exists v_0>0 \qquad\int_{\mathbb{R}}e^{v_{0}z^{2}}
\,d\rho (z)<+\infty.
\]
We suppose that one of the following conditions holds:
\begin{longlist}[(a)]
\item[(a)] $\rho$ has a density.

\item[(b)] $\rho$ is the sum of a finite number of Dirac masses.

\item[(c)] There exists $c>0$ such that $\rho(]0,c[)=0$.

\item[(d)] $\rho(\{0\})<1/\sqrt{e}$.
\end{longlist}
Then, under $\tilde{\mu}_{n,\rho}$, $(S_{n}/n,T_{n}/n)$
converges in probability toward $(0,\sigma^{2})$.
\end{theo}

By the classical central limit theorem, under $\rho^{\otimes n}$,
$S_{n}/\sqrt{n}$ converges in distribution to a normal distribution
with mean zero and variance $\sigma^{2}$. The following theorem shows that
given certain conditions, under $\tilde{\mu}_{n,\rho}$,
$S_{n}/n^{3/4}$ converges toward a specific distribution.

\begin{theo}\label{theoFluctuations}
Let $\rho$ be a probability measure on $\mathbb{R}$
having a density
$f$ which satisfies:
\begin{longlist}[(a)]
\item[(a)] $f$ is even.
\item[(b)] There exists $v_{0}>0$ such that
\[
\int_{\mathbb{R}}e^{v_{0}z^{2}}f(z)\,dz<+\infty.
\]
\item[(c)] There exists $p \in\,]1,2]$ such that
\[
\int_{\mathbb{R}^{2}}f^{p}(x+y)f^{p}(y)|x|^{1-p}\,dx\,dy<+\infty.
\]
Let $\sigma^{2}$ be the variance of $\rho$, and let $\mu
_{4}$ be
the fourth moment of $\rho$. We have
\[
\frac{\mu_{4}^{1/4}S_{n}}{\sigma^{2}n^{3/4}} \mathop {\longrightarrow}^{\mathcal{L}}_{n \to\infty}
\biggl(\frac{4}{3} \biggr)^{1/4}\Gamma \biggl(\frac{1}{4}
\biggr)^{-1} \exp \biggl(-\frac{s^{4}}{12} \biggr)\,ds.
\]
\end{longlist}
\end{theo}

The convergence can equivalently be rewritten as
\[
\frac{S_{n}}{n^{3/4}} \mathop{\longrightarrow}^{\mathcal{L}}_{n \to\infty
} \biggl(\frac{4\mu_{4}}{3\sigma^{8}} \biggr)^{1/4}\Gamma \biggl(\frac{1}{4}
\biggr)^{-1} \exp \biggl(-\frac{\mu_{4}}{12 \sigma
^{8}}s^{4} \biggr)\,ds.
\]
We prove this convergence in Section~\ref{ProofFluctuations}. The
following corollary is a version of Theorem~\ref{theoFluctuations} with
a hypothesis which is weaker but easier to check.

\begin{cor}\label{corFluctuations}
Let $\rho$ be a probability measure on $\mathbb{R}$ with
an even and
bounded density $f$ such that
\[
\exists v_{0}>0 \qquad\int_{\mathbb{R}}e^{v_{0}z^{2}}
\,d\rho (z)<+\infty.
\]
Let $\sigma^{2}$ be the variance of $\rho$, and let $\mu_{4}$ be the fourth
moment of $\rho$. Then
\[
\frac{\mu_{4}^{1/4}S_{n}}{\sigma^{2}n^{3/4}} \mathop{\longrightarrow}^{\mathcal{L}}_{n \to\infty}
\biggl(\frac
{4}{3} \biggr)^{1/4}\Gamma \biggl(\frac
{1}{4}
\biggr)^{-1} \exp \biggl(-\frac{s^{4}}{12} \biggr)\,ds.
\]
\end{cor}

\begin{pf}
We check that the hypotheses of the corollary imply the
condition~(c) of Theorem~\ref{theoFluctuations}. We have
\begin{eqnarray*}
&&\int_{\mathbb{R}^{2}} f^{3/2}(x+y)f^{3/2}(y)|x|^{-1/2}
\,dx\,dy
\\
&&\qquad=\int_{[-1,1]\times\mathbb{R}} \frac{f^{3/2}(x+y)f^{3/2}(y)}{|x|^{1/2}}\,dx\, dy\\
&&\qquad\quad{}+\int
_{[-1,1]^c\times\mathbb{R}} \frac
{f^{3/2}(x+y)f^{3/2}(y)}{|x|^{1/2}}\, dx\,dy
\\
&&\qquad\leq\|f\|_{\infty}^{3/2}\int_{[-1,1]\times\mathbb{R}}
\frac
{f^{3/2}(y)}{|x|^{1/2}}\,dx\,dy+\int_{[-1,1]^c\times\mathbb{R}} f^{3/2}(x+y)f^{3/2}(y)
\,dx\,dy
\\
&&\qquad \leq\|f\|_{\infty}^{3/2} \biggl(\int_{\mathbb{R}}\bigl|f(x)\bigr|^{3/2}
\, dx \biggr) \biggl(\int_{-1}^1
\frac{dx}{|x|^{1/2}} \biggr)+ \biggl(\int_{\mathbb
{R}}\bigl|f(x)\bigr|^{3/2}\, dx \biggr)^{2}.
\end{eqnarray*}
The second inequality is obtained by applying Fubini's theorem. These
terms are finite since
\[
\int_{\mathbb{R}}\bigl|f(x)\bigr|^{3/2}\,dx \leq\|f
\|^{1/2}_{\infty}\int_{\mathbb{R}}f(x)\,dx=\| f
\|^{1/2}_{\infty}<+\infty.
\]
Thus, with $p=3/2 \in\,]1,2]$, the function $(x,y)\longmapsto
f^{p}(x+y)f^{p}(y)|x|^{1-p}$ is integrable.
\end{pf}

For instance, if $\rho$ has a bounded support and a density which is even
and continuous on it, then the hypotheses of the theorem are fulfilled.


We end this section by computing the law of $(S_{n}/n,T_{n}/n)$ under
$\tilde{\mu}_{n,\rho}$ and explaining the strategy for\vspace*{1pt} proving these
results. We denote by $\tilde{\nu}_{n,\rho}$ the
law of $(S_{n}/n,T_{n}/n)$ under $\rho^{\otimes n}$. We have
\[
\forall(x_{1},\ldots,x_{n}) \in\mathbb{R}^{n}
\qquad\frac{(x_{1}+\cdots
+x_{n})^{2}}{x_{1}^{2}+\cdots+x_{n}^{2}}=n\frac{((x_{1}+\cdots
+x_{n})/n)^{2}}{(x_{1}^{2}+\cdots+x_{n}^{2})/n}.
\]
Hence, for any bounded measurable function $f \dvtx  \mathbb
{R}^{2}\longrightarrow\mathbb{R}$,
\[
\mathbb{E}_{\tilde{\mu}_{n,\rho}} \biggl(f \biggl(\frac
{S_{n}}{n},\frac
{T_{n}}{n}
\biggr) \biggr)=\frac{1}{Z_{n}}\int_{\mathbb
{R}^{2}}f(x,y)\exp
\biggl(\frac{nx^{2}}{2y} \biggr)\mathbh{1}_{\{y>0\}}\,d\tilde{\nu
}_{n,\rho}(x,y).
\]
By convexity of $t\longmapsto t^{2}$, we have $S_{n}^{2}\leq nT_{n}$
for any $n \geq1$. We define
\[
\Delta=\bigl\{ (x,y) \in\mathbb{R}^{2} \dvtx  x^{2} \leq y
\bigr\} \quad\mbox{and} \quad\Delta^{*}=\Delta\setminus\bigl\{(0,0)\bigr\}.
\]
Thus $\tilde{\nu}_{n,\rho} (\Delta^{c} )=0$.
Therefore we have
the following proposition:

\begin{prop}\label{loiSnTn1}
Under $\tilde{\mu}_{n,\rho}$, the law of $(S_{n}/n,T_{n}
/n)$ is
\[
\frac{ {\exp(({nx^{2}})/({2y}))\mathbh
{1}_{\Delta^{  *}}(x,y)\,d\tilde{\nu}_{n,\rho
}(x,y)}}{{\int_{\Delta^{
*}}\exp (({ns^{2}})/({2t}))\,d\tilde{\nu}_{n,\rho
}(s,t)}}.
\]
\end{prop}

We denote by $\nu_{\rho}$ the law of $(Z,Z^{2})$ where $Z$ is a random
variable with distribution $\rho$. The log-Laplace $\Lambda$ of $\nu
_{\rho}$ is
the map defined on $\mathbb{R}^2$ by
\[
\forall(u,v) \in\mathbb{R}^{2} \qquad\Lambda(u,v)=\ln\int
_{\mathbb{R}^{2}}e^{us+vt}\,d\nu _{\rho}(s,t)=\ln\int
_{\mathbb{R}}e^{uz+vz^{2}}\,d\rho(z),
\]
and the Cram\'er transform $I$ of $\nu_{\rho}$ is defined on $\mathbb
{R}^2$ by
\[
\forall(x,y) \in\mathbb{R}^2 \qquad I(x,y)=\sup_{(u,v)\in\mathbb{R}^{2}}
\bigl(xu+yv-\Lambda(u,v) \bigr).
\]
For $n \geq1$, under $\rho^{\otimes n}$, $(S_{n}/n,T_{n}/n)$ is the sum
of $n$ independent and identically distributed random variables with
distribution $\nu_{\rho}$. We refer to Appendix~\ref{appB} for some definitions
and results on large deviations, especially Cram\'er's theorem
(Theorem~\ref{Cramer}) which states that if $\Lambda$ is finite in the
neighborhood of~$(0,0)$, then $I$ is a good rate function, and
$(\tilde{\nu}_{n,\rho})_{n \geq1}$ satisfies the large deviations
principle with speed~$n$, governed by $I$.

Here is a classical heuristic on large deviations, suggested by a
consequence of Varadhan's lemma (see Theorem~II.7.2~of~\cite{Ellis}):
as $n$ goes to $+\infty$, the law of $(S_{n}/n,T_{n}/n)$ under
$\tilde{\mu}_{n,\rho}$ concentrates exponentially fast on the minima
on $\Delta^{*}$ of the function
\[
G=I-F- \inf_{\Delta^{ *}} (I-F),
\]
where $F$ is the map defined by
\[
\forall(x,y)\in\mathbb{R}\times\mathbb{R}\setminus\{0\} \qquad F(x,y)=
\frac
{x^{2}}{2y}.
\]
If $G$ has a unique minimum at $(x_{0},y_{0}) \in\Delta^{ *}$, then under
$\tilde{\mu}_{n,\rho}$, $(S_{n}/n,T_{n}/n)$ converges in probability
to $(x_{0},y_{0})$. Moreover, the large deviations principle suggests
that for $n$ large enough, $\tilde{\nu}_{n,\rho}$ can
roughly be approximated by the distribution $C_{n}\exp(-nI(x,y))\,dx\,
dy$ where $C_{n}$ is a normalizing constant. Thus, for each bounded
continuous function $h$ and $\alpha,\beta>0$,
\begin{eqnarray*}
\mathbb{E}_{\tilde{\mu}_{n}} \biggl(h \biggl(\frac
{S_{n}-nx_{0}}{n^{1-\alpha}} \biggr) \biggr)
&\approx & \frac{\int_{\Delta^{ *}}h((x-x_{0})
n^{\alpha
})\exp (-nG(x,y) )\,dx\,dy}{\int_{\Delta
^{ *}}\exp
 (-nG(x,y) )\,dx\,dy}
\\
& \approx & \frac{\int_{\Delta^{ *}}h(x)\exp
(-nG
(xn^{-\alpha}+x_{0},yn^{-\beta}+y_{0} ) )\,dx\,
dy}{\int_{\Delta^{ *}}\exp (-nG (xn^{-\alpha
}+x_{0},yn^{-\beta}+y_{0}
) )\,dx\,dy}.
\end{eqnarray*}
We use then Laplace's method. The key point is the study of the
function $G$ in the neighborhood of its minimum $(x_{0},y_{0})$. We
find four positive values $A$, $B$, $a\in\mathbb{N}$ and $b\in
\mathbb{N}$ such that,
uniformly on a neighborhood of $(x_{0},y_{0})$,
\[
-nG \bigl(xn^{-1/a}+x_{0},yn^{-1/b}+y_{0}
\bigr)\mathop {\longrightarrow}_{n \to\infty}-Ax^{a}-By^{b}.
\]

We prove that $I-F$ has a unique minimum at $(0,\sigma^2)$ on $\Delta
^{ *}$ in
Section~\ref{MinimaI-F}. Next we give the proof of Theorem~\ref{theoCVproba} in Section~\ref{ProofCVproba}, with the help of a variant
of Varadhan's lemma. Finally we compute the expansion of $I-F$ around
$(0,\sigma^2)$ in Section~\ref{Expansion}, and we prove Theorem~\ref
{theoFluctuations} with Laplace's method in Section~\ref{ProofFluctuations}. Throughout these proofs we use some general
results on the Cram\'er transform, stated in Appendix~\ref{appA}.

\section{Minimum of $I-F$ on \texorpdfstring{$\Delta^{*}$}{$Delta^{*}$}}
\label{MinimaI-F}

Let $\rho$ be a symmetric probability measure on $\mathbb{R}$. In this
section, we will use Proposition~\ref{Dadmissible} in the \hyperref[appA]{Appendix} to
show an inequality between $I$ and $F$.

We denote by $\nu_{\rho}$ the distribution of $(Z,Z^{2})$ when $Z$ is a
random variable with law $\rho$. If the support of $\rho$ contains at least
three points, then $\nu_{\rho}$ is a nondegenerate measure on
$\mathbb{R}^{2}$;
see the first paragraphs of Appendix~\ref{appA}. We denote by $\mathcal{C}$ the convex
hull of the set
$\{ (x,x^{2})\dvtx x \mbox{ is in the support of } \rho \}$.
The function
\[
\Lambda\dvtx  (u,v)\in\mathbb{R}^{2} \longmapsto\ln\int
_{\mathbb{R}}e^{uz+vz^{2}}\,d\rho(z)
\]
is the log-Laplace of $\nu_{\rho}$, and its domain of definition
$D_{\Lambda}$
contains $\mathbb{R}\times\,]{-}\infty,0[$; thus its interior is
nonempty. Let
$I$ be the Cram\'er transform of $\nu_{\rho}$. We denote by $D_I$ its
domain of definition and by $A_{I}=
\nabla\Lambda({\mathop{\rule{0pt}{6pt}\smash{D}}\limits^{\circ}}_{\Lambda})$
its admissible domain; see Definition~\ref{adm}
in the \hyperref[appA]{Appendix}.

Using Jensen's inequality, we get that $I(0,\sigma^{2})=0$. Moreover the
infimum of $I-F$ on $\Delta^{ *}$ belongs\vspace*{1pt} to $[-1/2,0]$. The function $I$
is even in the first variable. Indeed, if $(x,y) \in\mathbb{R}^{2}$, then
\begin{eqnarray*}
I(-x,y)&=&\sup_{(u,v) \in\mathbb{R}^{2}} \biggl(-xu+yv-\ln\int
_{\mathbb{R}
}e^{uz+vz^{2}}\,d\rho(z) \biggr)
\\
&=&\sup_{(u,v) \in\mathbb{R}^{2}} \biggl(xu+yv-\ln\int_{\mathbb
{R}}e^{-uz+vz^{2}}
\,d\rho (z) \biggr)=I(x,y).
\end{eqnarray*}
Assume that $I-F$ has a unique minimum $(x_{0},y_{0})$ on $\Delta^{ *}$.
Then $(-x_{0},y_{0})$ is also a minimum of $I-F$. The uniqueness of the
minimum implies that $x_{0}=0$ so that $I-F$ is nonnegative on $\Delta^{*}$.
Finally, since $I(0,\sigma^{2})=0$, we have $y_{0}=\sigma^{2}$.

Consider first the case of a Bernoulli distribution for which $\nu
_{\rho
}$ is degenerate. Let $c>0$. Suppose that $\rho=(\delta_{-c}+\delta
_{c})/2$. The
law $\rho$ is centered, and its variance is $c^{2}$. We can compute
$\Lambda$
and $I$ explicitly in the following way:
\[
\forall(u,v) \in\mathbb{R}^{2} \qquad\Lambda(u,v)=vc^{2}+
\ln \operatorname{cosh}(uc).
\]
For any $(x,y)\notin[-c,c]\times\{c^{2}\}$, $I(x,y)=+\infty$ and
\[
\forall x\in\,]{-}c,c[ \qquad I\bigl(x,c^{2}\bigr)=\frac{1}{2c}
\bigl((c+x)\ln (c+x)+(c-x)\ln(c-x) \bigr)-\ln c.
\]

The study of the function $x \longmapsto I(x,c^{2})-x^{2}/(2c^{2})$
shows that, in the Bernoulli case, $I-F$ has a unique minimum at
$(0,\sigma^{2})$. More generally we have the following lemma:

\begin{lem}\label{casbinomial}
Let $c>0$. We define
\[
\phi_{c} \dvtx x \in\mathbb{R}\longmapsto\sup_{u \in\mathbb{R}}
\bigl( ux-\ln\operatorname{cosh}(uc) \bigr).
\]
The function $x\longmapsto\phi_{c}(x)-x^{2}/(2c^{2})$
is increasing on $[0,c]$, decreasing on $[-c,0]$ and null at~$0$.
\end{lem}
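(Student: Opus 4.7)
\noindent\textbf{Proof plan for lemma \ref{casbinomial}.}
The function $u\longmapsto \ln\mathrm{cosh}(uc)$ is the logarithmic moment generating function of the Bernoulli measure $\r_c=(\d_{-c}+\d_c)/2$, so $\phi_c$ is exactly the Cram\'er transform of $\r_c$. In particular $\phi_c$ is convex, non-negative, lower semi-continuous on $\R$, and finite precisely on the convex hull $[-c,c]$ of the support of $\r_c$. Moreover, $\phi_c(0)=\sup_{u\in\R}(-\ln\mathrm{cosh}(uc))=0$, attained at $u=0$, so $g(0)=0$ where $g(x)=\phi_c(x)-x^2/(2c^2)$. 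This already identifies $\phi_c$ with $I(\cdot,c^2)$ in the discussion preceding the lemma, and reduces the lemma to what was essentially just done; I nevertheless redo the derivative computation cleanly.

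For $x\in\,]-c,c[\,$ the map $u\longmapsto ux-\ln\mathrm{cosh}(uc)$ is strictly concave and tends to $-\infty$ at $\pm\infty$, so its supremum is attained at the unique critical point $u^*(x)$ characterized by $c\,\mathrm{tanh}(cu^*(x))=x$, i.e.,
\[u^*(x)=\frac{1}{c}\,\mathrm{arctanh}\!\left(\frac{x}{c}\right).\]
By the envelope theorem (or by differentiating the explicit expression $\phi_c(x)=x u^*(x)-\ln\mathrm{cosh}(cu^*(x))$ and using the defining equation of $u^*$), $\phi_c$ is $\Ck{\infty}$ on $]-c,c[$ with $\phi_c'(x)=u^*(x)$. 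Hence
\[g'(x)=\frac{1}{c}\,\mathrm{arctanh}\!\left(\frac{x}{c}\right)-\frac{x}{c^2},\qquad g''(x)=\frac{1}{c^2-x^2}-\frac{1}{c^2}\geq 0,\]
with strict inequality for $x\neq 0$. Since $g'(0)=0$ and $g'$ is strictly increasing on $]-c,c[$, we have $g'<0$ on $]-c,0[$ and $g'>0$ on $]0,c[$, so $g$ is strictly decreasing on $]-c,0]$ and strictly increasing on $[0,c[$.

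To extend monotonicity to the closed interval, it is enough to check continuity of $\phi_c$ at $\pm c$. Setting $t=x/c$ and using $\mathrm{arctanh}(t)=\frac{1}{2}\ln\frac{1+t}{1-t}$, the explicit formula
\[\phi_c(x)=\tfrac{1+t}{2}\ln(1+t)+\tfrac{1-t}{2}\ln(1-t)\]
tends to $\ln 2=\phi_c(\pm c)$ as $t\to\pm 1$, giving the desired continuity; thus the monotonicity statements hold on $[0,c]$ and $[-c,0]$. The only potentially delicate point is this endpoint continuity, but it is immediate from the formula above (or alternatively from the general fact that the Cram\'er transform of a compactly supported measure is continuous on its effective domain). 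This completes the plan.
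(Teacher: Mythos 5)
Your proof is correct and follows essentially the same route as the paper: identify $\phi_{c}$ as the Cram\'er transform of $(\d_{-c}+\d_{c})/2$, obtain $g'(x)=\frac{1}{c}\,\mathrm{arctanh}(x/c)-x/c^{2}$ and $g''(x)=\frac{1}{c^{2}-x^{2}}-\frac{1}{c^{2}}\geq 0$, and conclude from $g'(0)=0$ and the monotonicity of $g'$. Your explicit check of continuity at the endpoints $\pm c$ (where $\phi_{c}(\pm c)=\ln 2$) is a small point the paper passes over more quickly, but both arguments are the same in substance.
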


Notice that the Bernoulli case is special since if $X$ is a random
variable with distribution $\rho=(\delta_{-c}+\delta_{c})/2$, then
$X^{2}=c^{2}$
almost surely. Thus
\begin{eqnarray*}
&& \frac{1}{Z_{n}}\exp \biggl(\frac{1}{2}\frac{(x_{1}+\cdots
+x_{n})^{2}}{x_{1}^{2}+\cdots+x_{n}^{2}} \biggr)
\mathbh{1}_{\{x_{1}^{2}+\cdots +x_{n}^{2}>0\}} \prod_{i=1}^{n}\,d
\rho(x_{i})
\\
&& \qquad =\frac{1}{Z_{n}(1/c^{2})}\exp \biggl(\frac{(x_{1}+\cdots
+x_{n})^{2}}{2nc^{2}} \biggr) \prod
_{i=1}^{n}\,d\rho(x_{i}).
\end{eqnarray*}
This is exactly the classical Curie--Weiss model at the critical point.

In the following, we suppose that the support of $\nu_{\rho}$
contains at
least three distinct points. We first show that if $D_{\Lambda}$ is an open
subset of $\mathbb{R}^{2}$, then $I-F$ has a unique minimum at
$(0,\sigma^{2})$. To
this end,\vspace*{-1pt} we use Proposition~\ref{Dadmissible} in the \hyperref[appA]{Appendix} which
states that $I$ is differentiable on $A_{I}=\DD_{I}=\CC$. Moreover,
if $(x,y)\longmapsto(u(x,y),v(x,y))$ is the inverse function of
$\nabla\Lambda$, then
\[
\forall(x,y) \in \DD_{I} \qquad
\frac{\partial I}{\partial
x}(x,y)=u(x,y).
\]
If we show that $u(x,y) >x/y$ for any $x,y>0$, then by integrating this
inequality,
\[
\forall(x,y) \in\DD_{I} \qquad 0\leq
\varepsilon<x \quad \Longrightarrow \quad I(x,y)-\frac{x^{2}}{2y}> I(\varepsilon,y)-
\frac{\varepsilon
^{2}}{2y}.
\]
To obtain that $I-F$ has a unique minimum at $(0,\sigma^{2})$, it is enough
to extend this inequality to the boundary points of $D_{I}$ (if they
exist). We conclude by using the fact that $I$ is even in its first variable.

The following lemma is the key result to establish the uniqueness of
the minimum of $I-F$, when $\rho$ is symmetric.

\begin{lem}
\label{lemxy}
Let $\rho$ be a symmetric probability measure whose support
contains at least three points. For $(x,y) \in A_I$, we have $u(x,y)=0$
if $x=0$ and
\begin{eqnarray*}
u(x,y) &>&  \frac{x}{y} \qquad\mbox{if } x>0,
\\
u(x,y) &<&  \frac{x}{y} \qquad\mbox{if }x<0.
\end{eqnarray*}
\end{lem}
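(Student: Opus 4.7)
By proposition~\ref{Dadmissible}, the pair $(u(x,y),v(x,y))$ is the unique solution in $\Dro_{\L}$ of $\nabla\L(u,v)=(x,y)$, which unpacks into
\[
x \,=\, \frac{\int z\, e^{uz+vz^{2}}\,d\r(z)}{\int e^{uz+vz^{2}}\,d\r(z)}, \qquad y \,=\, \frac{\int z^{2}\, e^{uz+vz^{2}}\,d\r(z)}{\int e^{uz+vz^{2}}\,d\r(z)}.
\]
The first step is to use the symmetry $\r\circ(-\mathrm{id})=\r$ to replace each integrand by its even part, obtaining
\[
x \,=\, \frac{\int z\sinh(uz)\, e^{vz^{2}}\,d\r(z)}{\int \cosh(uz)\, e^{vz^{2}}\,d\r(z)}, \qquad y \,=\, \frac{\int z^{2}\cosh(uz)\, e^{vz^{2}}\,d\r(z)}{\int \cosh(uz)\, e^{vz^{2}}\,d\r(z)}.
\]

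From the first identity, $z\sinh(uz)\geq 0$ everywhere, with strict positivity off $\{z=0\}\cup\{u=0\}$; since the support of $\r$ is symmetric and contains at least three points, $\r$ charges $\R\setminus\{0\}$, and hence the sign of $x$ agrees with the sign of $u$, with $x=0$ if and only if $u=0$. This already settles the case $x=0$ of the lemma.

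Now assume $x>0$, so that $u>0$. Since $y>0$ on $A_{I}$ (the tilted measure is not a Dirac at $0$), the target inequality $u>x/y$ is equivalent to $uy-x>0$; combining the two formulas above, this reduces to
\[
\int \bigl[\,u z^{2}\cosh(uz) \,-\, z\sinh(uz)\,\bigr]\, e^{vz^{2}}\,d\r(z) \,>\, 0.
\]
I would then factor the integrand as $z\cosh(uz)\bigl[\,uz-\tanh(uz)\,\bigr]$. Because $t\mapsto t-\tanh(t)$ is odd and strictly positive on $(0,+\infty)$, this factored form is an \emph{even} function of $z$ and is strictly positive for every $z\neq 0$ when $u>0$. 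Integrating against $e^{vz^{2}}\,d\r(z)$ therefore yields a strictly positive value, proving $u(x,y)>x/y$. The symmetric case $x<0$ follows at once from the identity $I(-x,y)=I(x,y)$ established in the text before the statement, which by differentiation gives $u(-x,y)=-u(x,y)$. There is no real obstacle here: the only nontrivial ingredient is the factorization of the integrand together with the elementary inequality $\tanh(t)<t$ for $t>0$.
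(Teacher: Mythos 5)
Your proposal is correct and follows essentially the same route as the paper: both arguments rest on the characterization $(x,y)=\nabla\L(u,v)$, the symmetrization of the integrals, and the elementary inequality $\tanh(t)<t$ for $t>0$ (the paper phrases it as $\sinh(uz)\leq uz\cosh(uz)$ with equality only when $uz=0$, which is exactly your factorization $z\cosh(uz)\left[uz-\tanh(uz)\right]\geq 0$). The only cosmetic difference is that the paper bounds $x$ directly by $uy$ term by term, whereas you assemble the difference $uy-x$ into a single integral before invoking the same pointwise inequality.
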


\begin{pf}
The vector $(u,v)=(u(x,y),v(x,y))$ verifies
\[
(x,y)=\nabla\Lambda(u,v)= \biggl(\frac{ {\int_{\mathbb
{R}}ze^{uz+vz^{2}}
\,d\rho(z)}}{ {\int_{\mathbb{R}}e^{uz+vz^{2}}\,d\rho
(z)}},\frac
{ {\int_{\mathbb{R}}z^{2}e^{uz+vz^{2}}\,d\rho
(z)}}{ {\int_{\mathbb{R}}e^{uz+vz^{2}}\,d\rho(z)}} \biggr).
\]
The distribution $\rho$ is symmetric, thus
\[
\int_{\mathbb{R}}ze^{uz+vz^{2}}\,d\rho(z)=\int
_{0}^{+\infty}2z \operatorname{sinh}(uz)e^{vz^{2}}\,d
\rho(z).
\]
This formula shows that $u$ and $x$ have the same sign. Moreover for
any $z \geq0$, $\operatorname{tanh}(z)\leq z$. Thus if $x>0$, then $\operatorname{sinh}(uz)\leq uz\operatorname{cosh}(uz)$. The equality holds if and only if
$uz=0$. Therefore, using the symmetry of $\rho$,
\[
x < u \frac{\int_{0}^{+\infty}2z^{2} \operatorname{cosh}(uz)e^{vz^{2}}\,d\rho(z)}{\int_{\mathbb
{R}}e^{uz+vz^{2}}\,d\rho
(z)}=u \frac{\int_{\mathbb{R}}z^{2}
e^{uz+vz^{2}}\,d\rho
(z)}{\int_{\mathbb{R}}e^{uz+vz^{2}}\,d\rho(z)}=uy.
\]
Since $x>0$, $u>0$ and $y>0$, we conclude that $u > x/y$.
Similarly, we show that if $x<0$, then $u < x/y$.
\end{pf}

We can now prove the following inequality:

\begin{prop}\label{inegaliteI-F.0}
If $\rho$ is a symmetric probability measure on $\mathbb
{R}$ with
positive variance $\sigma^{2}$ and such that $D_{\Lambda}$ is an open
subset of
$\mathbb{R}^{2}$, then
\[
\forall(x,\varepsilon,y) \in\mathbb{R}\times\mathbb{R}\times \mathbb{R}
\setminus\{0\} \qquad 0\leq \varepsilon<x  \quad \Longrightarrow \quad I(x,y)-\frac{x^{2}}{2y}
\geq I(\varepsilon,y)-\frac
{\varepsilon^{2}}{2y}.
\]
This inequality is strict if $(\varepsilon,y) \in\DD_{I}$.
\end{prop}

The inequality is also true for $x<\varepsilon\leq0$ since $I$ is
even in
its first variable. In Corollary~\ref{inegaliteI-F}, we shall extend
the inequality to any symmetric distribution on $\mathbb{R}$.

\begin{pf*}{Proof of Proposition~\protect\ref{inegaliteI-F.0}}
We have already treated the Bernoulli case. We assume
next that the support of $\rho$ contains at least three points. The
Cram\'er transform~$I$ is $\mathrm{C}^{\infty}$ on $\DD_{I}$ and
\[
\forall(x,y) \in\DD_{I} \qquad
\frac{\partial I}{\partial
x}(x,y)=u(x,y).
\]
Let us examine the structure of the set $D_{I}$. We put
\[
\forall y>0 \qquad D_{I,y}=\bigl\{ x \in\mathbb{R}\dvtx (x,y) \in
D_I \bigr\}
\]
(see Figure~\ref{fig1}).
Let $y>0$ be such that $(x,y) \in\DD_{I}$ for some $x \in\mathbb{R}$. The
set $D_{I,y}$ is a convex subset of $\mathbb{R}$. Moreover $x
\longmapsto
I(x,y)$ is even, therefore $\DD_{I,y}$ (the interior of $D_{I,y}$ as a
subset of $\mathbb{R}$) is an open interval $]{-}a(y),a(y)[$ with $a(y)
\in
[0,\sqrt{y}]$. Lemma~\ref{lemxy} implies that $u(t,y)> t/y$ for any
$t\in \,]0,a(y)[$. Thus, for any $x \in\DD_{I,y} \,\cap \,]0,+\infty[$,
\[
\forall\varepsilon\in[0,x[ \qquad I(x,y)-I(\varepsilon,y)=\int
_{\varepsilon}^{x}u(t,y)\, dt>\int_{\varepsilon}^{x}
\frac{t}{y}\,dt=\frac{x^{2}}{2y}-\frac
{\varepsilon^{2}}{2y}.
\]
There is no problem of definition at $y=0$ since $\DD_{I} \subset\Delta
^{ *}$ does not contain $\mathbb{R}\times\{0\}$ and $\DD_{I,0}=\varnothing$. Moreover
\[
x \longmapsto\frac{I(x,y)-I(\varepsilon,y)}{x-\varepsilon}
\]
is nondecreasing on $D_{I,y}\setminus\{\varepsilon\}$ since $I$ is convex.
Therefore, if $-a(y)$ and~$a(y)$ belong to $D_{I,y}$, then the previous
inequality extends to $x=-a(y)$ and $x=a(y)$.

%
\begin{figure}

\includegraphics{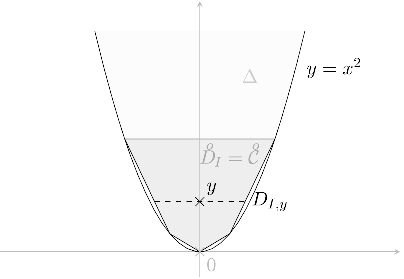}

\caption{Case where $\rho$ is symmetric discrete and charges 5 points.}\label{fig1}
\end{figure}

We have shown that
\[
\forall(x,y) \in D_{I} \qquad y>0, 0\leq\varepsilon<x
\quad\Longrightarrow \quad I(x,y)-I(\varepsilon,y) > \frac{x^{2}}{2y}-\frac{\varepsilon
^{2}}{2y},
\]
except for the points $(x,y)$ of the superior and inferior borders of
$D_{I}$, if they exist. More precisely, we set
\[
K^{2}=\inf \bigl\{ x^{2}\dvtx  x \mbox{ is in the support of }
\rho\bigr\}\geq0
\]
and
\[
L^{2}=\sup \bigl\{ x^{2} \dvtx x \mbox{ is in the support of }
\rho\bigr\}\leq +\infty.
\]

If $K=0$ and $L=+\infty$, then the inequality is already proven on the
set $D_{I}\setminus\{(0,0)\}$. Suppose that $K^{2}>0$. Let $y=K^{2}$
and $x \in\mathbb{R}$. We define
\[
f \dvtx  (u,v) \in\mathbb{R}^{2} \longmapsto ux+vK^{2}-
\Lambda(u,v).
\]
Denoting $c_{K}=\rho(\{K\})$, we have for all $(u,v) \in\mathbb{R}^{2}$,
\[
f(u,v)=ux-\ln\bigl(2 c_{K} \operatorname{cosh}(uK)\bigr)-\ln\int
_{\mathbb
{R}\setminus
[-K,K]}e^{uz+v(z^{2}-K^{2})}\,d\rho(z).
\]
For any $z \in\mathbb{R}\setminus\, ]{-}K,K[$, the function $v
\longmapsto\exp
(v(z^{2}-K^{2}))$ is nondecreasing. Therefore
\begin{eqnarray*}
&& \sup_{v \in\mathbb{R}} f(u,v)- \bigl(ux-\ln\bigl(2 c_{K}
\operatorname{cosh}(uK)\bigr) \bigr)
\\
&&\qquad =-\ln \biggl(\lim_{v \to-\infty}\int_{\mathbb{R}\setminus
[-K,K]}
e^{uz+v(z^{2}-K^{2})}\, d\rho(z) \biggr)=0,
\end{eqnarray*}
by the dominated convergence theorem. Indeed
\[
\forall z \in\mathbb{R}\setminus[-K,K],\forall v<-1 \qquad \bigl\llvert
e^{uz+v(z^{2}-K^{2})}\bigr\rrvert \leq e^{uz-(z^{2}-K^{2})},
\]
and the map $z \in\mathbb{R}\setminus[-K,K] \longmapsto
e^{uz-(z^{2}-K^{2})}$ is integrable with respect to~$\rho$ since it is
bounded (it is continuous and goes to~$0$ when $|z|$ goes to $+\infty
$). Hence
\[
I\bigl(x,K^{2}\bigr)=\sup_{u,v \in\mathbb{R}} f(u,v)=\sup
_{u \in\mathbb
{R}} \bigl\{ ux-\ln\bigl(2 c_{K}
\operatorname{cosh}(uK)\bigr) \bigr\}.
\]
In fact, we come back to the Bernoulli case. The reason is that, if we
condition on $T_{n}=K^{2}$ in our model, then for any $i$,
$X^{i}_{n}=-K$ or $K$.

If $c_{K}=0$, then $I(x,K^{2})=+\infty$ for any $x \neq0$, so that the
(large) inequality is verified for $y=K^{2}$. If $c_{K}>0$, then
Lemma~\ref{casbinomial} implies that, for any $\varepsilon,x$
in~$\mathbb{R}$ such
that $0\leq\varepsilon< x\leq K$,
\[
I\bigl(x,K^{2}\bigr)-I\bigl(\varepsilon,K^{2}\bigr)=
\phi_{K}(x)-\phi_{K}(\varepsilon)> \frac
{x^{2}}{2K^{2}}-
\frac{\varepsilon^{2}}{2K^{2}}.
\]

If $L<+\infty$, then we show similarly the inequality for $y=L^{2}$. Therefore
\[
\forall(x,y) \in D_{I}\setminus\bigl\{(0,0)\bigr\} \qquad 0\leq
\varepsilon<x \quad\Longrightarrow\quad I(x,y)-\frac{x^{2}}{2y}\geq I(\varepsilon,y)-
\frac
{\varepsilon
^{2}}{2y},
\]
and this inequality is strict if $(\varepsilon,y) \in\DD_{I}$. Finally we
notice that for any $y \in\mathbb{R}$, by the convexity and the
symmetry of $x
\longmapsto I(x,y)$, if $I(\varepsilon,y)=+\infty$, then for all
$x>\varepsilon$,
$I(x,y)=+\infty$. Therefore the inequality extends to each subset of
$\mathbb{R}
^{2}$ which does not contain $\mathbb{R}\times\{0\}$.
\end{pf*}

From the arguments in the previous proof, we notice that if we take
$x=0$ and $y=0$, then for any $u \in\mathbb{R}$, the function $v
\longmapsto\Lambda
(u,v)$ is nondecreasing on $\mathbb{R}$. Therefore
\[
\inf_{v \in\mathbb{R}}\Lambda(u,v)=\lim_{v \to-\infty} \Lambda
(u,v)=\lim_{v \to-\infty
} \biggl(\ln\rho\bigl(\{0\}\bigr)+\ln\int
_{\mathbb{R}\setminus\{0\}
}e^{uz+vz^{2}}\,d\rho (z) \biggr).
\]
By the dominated convergence theorem, the last integral is equal to
$\ln
\rho(\{0\})$. Hence
\[
\inf_{u,v \in\mathbb{R}^{2}} \Lambda(u,v)=\ln\rho\bigl(\{0\}\bigr).
\]
This is valid for any probability measure $\rho$ on $\mathbb{R}$.
This yields the
following lemma:

\begin{lem}\label{I(0,0)=}
If $\rho$ is a probability measure on $\mathbb{R}$, then
$I(0,0)=-\ln
\rho(\{0\})$.
\end{lem}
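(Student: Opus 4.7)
The plan is to exploit directly the variational formula $I(0,0) = \sup_{(u,v) \in \R^2}(-\L(u,v)) = -\inf_{(u,v)\in \R^2}\L(u,v)$, so the task reduces to computing the infimum of $\L$ on $\R^2$.

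First I would observe that for each fixed $u \in \R$, the map $v \longmapsto \L(u,v)$ is non-decreasing on $\R$, because for every $z \in \R$ the integrand $e^{uz+vz^2}$ is non-decreasing in $v$ (indeed $z^2 \geq 0$). Consequently
\[\inf_{v\in \R}\L(u,v)=\lim_{v\to -\infty}\L(u,v).\]
Splitting the integral at $0$ I would write
\[\int_{\R}e^{uz+vz^{2}}\,d\r(z)\,=\,\r(\{0\})\,+\,\int_{\R\backslash\{0\}}e^{uz+vz^{2}}\,d\r(z),\]
and it remains to show that the second term tends to $0$ as $v \to -\infty$.

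The key step is a dominated convergence argument: for $v \leq -1$ and every $z \in \R\backslash\{0\}$ one has $|e^{uz+vz^{2}}| \leq e^{uz-z^{2}}$, and the function $z \longmapsto e^{uz-z^{2}}$ is bounded on $\R$ (its maximum is attained at $z=u/2$), hence integrable with respect to any probability measure, in particular $\r$. Since for every $z \neq 0$ we have $e^{uz+vz^{2}} \to 0$ when $v \to -\infty$, dominated convergence yields
\[\lim_{v \to -\infty}\int_{\R\backslash\{0\}}e^{uz+vz^{2}}\,d\r(z)=0,\]
so that $\inf_{v}\L(u,v)=\ln \r(\{0\})$, independently of $u$.

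I expect no real obstacle beyond this — one just has to be careful about the degenerate case $\r(\{0\})=0$, where the right-hand side reads $-\infty$ and the convention $I(0,0)=+\infty$ is consistent with the formula $I(0,0)=-\ln \r(\{0\})$. Taking infimum over $u$ then gives $\inf_{(u,v)\in \R^2}\L(u,v)=\ln \r(\{0\})$, whence $I(0,0)=-\ln \r(\{0\})$, proving the lemma.
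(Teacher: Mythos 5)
Your argument is correct and matches the paper's proof essentially line for line: the paper likewise notes that $v\mapsto\L(u,v)$ is non-decreasing for each fixed $u$, passes to the limit $v\to-\infty$ by dominated convergence to isolate the atom $\r(\{0\})$, and concludes $\inf_{u,v}\L(u,v)=\ln\r(\{0\})$, hence $I(0,0)=-\ln\r(\{0\})$. Your explicit remark on the degenerate case $\r(\{0\})=0$ (where both sides equal $+\infty$) is a welcome small addition that the paper leaves implicit.
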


A consequence of Proposition~\ref{inegaliteI-F.0} and the fact that $I$
is even in its first variable is that if $D_{\Lambda}$ is an open
subset of
$\mathbb{R}^{2}$, then the function $I-F$ has a unique minimum on
$\Delta^{ *}$ at
$(0,\sigma^{2})$. Now we will extend this result to any symmetric
probability measure such that $(0,0)\in\DD_{\Lambda}$. For this we need
Mosco's theorem, which we restate next.

\begin{defi}
Let $f$ and $f_{n}$, $n \in\mathbb{N}$, be convex functions
from $\mathbb{R}^{d}$ to $[-\infty,+\infty]$. The sequence
$(f_{n})_{n \in\mathbb{N}}$
is said to Mosco converge to $f$ if for any $x \in\mathbb{R}^{d}$, we have:
\begin{longlist}[$\star$]
\item[$\star$] for each sequence $(x_{n})_{n \in\mathbb{N}}$ in
$\mathbb{R}^{d}$
converging to $x$,
\[
\liminf_{n \to+\infty}f_{n}(x_{n}) \geq f(x) ;
\]
\item[$\star$] there exists a sequence $(x_{n})_{n \in\mathbb{N}}$ in
$\mathbb{R}^{d}$
converging to $x$ and such that
\[
\limsup_{n \to+\infty}f_{n}(x_{n}) \leq f(x).
\]
\end{longlist}
\end{defi}

If $f$ is a convex function from $\mathbb{R}^{d}$ to $[-\infty
,+\infty]$, we
denote by $f^{*}$ its Fenchel--Legendre transform $f^{*}$. We have the
following theorem (see~\cite{Mosco} for a proof):

\begin{theo}[(Mosco)]\label{Mosco}
Let $f$ and $f_{n}$, $n \in\mathbb{N}$, be
functions from
$\mathbb{R}^{d}$ to $[-\infty,+\infty]$ which are convex and lower
semi-continuous. Then $(f_{n})_{n\in\mathbb{N}}$ Mosco converges to
$f$ if and
only if $(f^{*}_{n})_{n\in\mathbb{N}}$ Mosco converges to $f^{*}$.
\end{theo}

\begin{prop}\label{corMosco}
Let $\nu$ be a probability measure on $\mathbb{R}^{d}$.
We denote
by~$L$ its log-Laplace. Let $(K_{n})_{n \in\mathbb{N}}$ be a nondecreasing
sequence of compact sets whose union is $\mathbb{R}^{d}$. For all $n
\in\mathbb{N}$,
we set $\nu_{n}=\nu(\cdot|K_{n})$ the probability~$\nu$ conditioned by
$K_{n}$, and we denote by $L_{n}$ its log-Laplace. Then $(L_{n})_{n\in
\mathbb{N}}$ Mosco converges to $L$.
\end{prop}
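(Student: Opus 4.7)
The plan is to verify the two defining conditions of Mosco convergence directly from the definitions of $L$ and $L_n$, using elementary tools of integration theory. For any $n$ large enough that $\nu(K_n)>0$, we can write
\[L_n(\l)=\ln\int_{K_n}e^{\langle \l,z\rangle}\,d\nu(z)-\ln\nu(K_n)\]
Since $K_n\nearrow\R^d$ and $\nu$ is a probability measure, the monotone convergence theorem applied to the constant function $1$ gives $\nu(K_n)\to 1$, hence $\ln\nu(K_n)\to 0$. This correction term is therefore harmless in what follows.

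For the limsup condition at a point $\l\in\R^d$, I take the constant sequence $\l_n=\l$. The indicator functions $\ind{K_n}$ form a non-decreasing sequence with pointwise limit $1$, so the monotone convergence theorem yields
\[\int_{K_n}e^{\langle \l,z\rangle}\,d\nu(z)\,\nearrow\,\int_{\R^d}e^{\langle \l,z\rangle}\,d\nu(z)=e^{L(\l)}\]
with the convention that the limit equals $+\infty$ when $L(\l)=+\infty$. Taking logarithms and using $\ln\nu(K_n)\to 0$, this gives $L_n(\l)\to L(\l)$ in $[-\infty,+\infty]$, which is stronger than $\limsup_n L_n(\l)\leq L(\l)$.

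For the liminf condition, I fix $\l\in\R^d$ together with an arbitrary sequence $\l_n\to\l$, and apply Fatou's lemma to the non-negative integrands $z\longmapsto\ind{K_n}(z)\,e^{\langle \l_n,z\rangle}$. For every fixed $z\in\R^d$, the point $z$ belongs to $K_n$ for all $n$ large enough since $(K_n)_{n\in \N}$ is non-decreasing and exhausts $\R^d$, and $e^{\langle \l_n,z\rangle}\to e^{\langle \l,z\rangle}$ by continuity. Fatou's lemma therefore gives
\[\liminfn \int_{K_n}e^{\langle \l_n,z\rangle}\,d\nu(z)\,\geq\,\int_{\R^d}e^{\langle \l,z\rangle}\,d\nu(z)=e^{L(\l)}\]
Taking logarithms and again using $\ln\nu(K_n)\to 0$ gives $\liminf_n L_n(\l_n)\geq L(\l)$, which is exactly the first condition of Mosco convergence.

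The proof is essentially mechanical once the correct convergence theorems are invoked, and I do not anticipate any real obstacle. The only mild subtleties are to handle the possibility $L(\l)=+\infty$, which is automatic on both sides since the estimates are valid in $[-\infty,+\infty]$, and to discard the finitely many values of $n$ for which $\nu(K_n)$ might vanish, which does not affect asymptotic statements.
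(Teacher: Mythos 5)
Your proof is correct and follows essentially the same route as the paper's: the same expression for $L_{n}$, the monotone convergence theorem with the constant sequence for the limsup condition, and Fatou's lemma applied to $z\longmapsto\ind{K_{n}}(z)e^{\langle \l_{n},z\rangle}$ for the liminf condition. Your explicit handling of the case $L(\l)=+\infty$ and of the finitely many $n$ with $\nu(K_{n})=0$ is a minor tidying of details the paper leaves implicit.
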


\begin{pf}
For\vspace*{1pt} $n$ large enough, the compact set~$K_{n}$ meets the
support of $\nu$. Thus, for $n$ large enough and $\lambda\in\mathbb
{R}^{d}$, we have
\[
L_{n}(\lambda)=\ln\int_{\mathbb{R}^{d}}e^{\langle\lambda,z
\rangle} \,d
\nu_{n}(z)=\ln \int_{K_{n}}e^{\langle\lambda,z \rangle} \,d\nu(z)
- \ln\nu (K_{n}).
\]
By the monotone convergence theorem,
\[
\lim_{n \to+\infty}L_{n}(\lambda)=\ln\int
_{\mathbb{R}^{d}}\lim_{n \to+\infty} \bigl(
\mathbh{1}_{K_{n}}(z)e^{\langle\lambda,z \rangle} \bigr) \,d\nu (z) - \lim
_{n \to
+\infty}\ln\nu(K_{n})=L(\lambda).
\]
Hence the second condition of Mosco convergence (with the $\limsup$)
is satisfied with the sequence $(\lambda_{n})_{n\in\mathbb
{N}}$ constant
equal to $\lambda$.

Let $\lambda\in\mathbb{R}^{d}$ and $(\lambda_{n})_{n\in\mathbb
{N}}$ be any sequence converging
to $\lambda$. Fatou's lemma implies that
\[
\exp L(\lambda)=\int_{\mathbb{R}^{d}}\liminf_{n \to+\infty
}
\mathbh{1}_{K_{n}}(z)e^{\langle\lambda_{n},z
\rangle}\,d\nu(z)\leq\liminf
_{n \to+\infty}\int_{\mathbb
{R}^{d}} \mathbh{1}_{K_{n}}(z)e^{\langle
\lambda_{n},z \rangle}
\,d\nu(z).
\]
Therefore
\[
L(\lambda)\leq\liminf_{n \to+\infty} \bigl(L_{n}(
\lambda_{n})+\ln \nu(K_{n}) \bigr)=\liminf
_{n \to+\infty} L_{n}(\lambda_{n}).
\]
Thus the first condition of Mosco convergence (with the $\liminf$) is verified, and the proposition is proved.
\end{pf}

\begin{cor}\label{inegaliteI-F}
If $\rho$ is a symmetric and nondegenerate probability
measure on $\mathbb{R}$, then
\[
\forall(x,y) \in\Delta^{ *}, \forall\varepsilon\in \bigl[0,|x|\bigr[ \qquad I(x,y)-
\frac{x^{2}}{2y} \geq I(\varepsilon,y)-\frac{\varepsilon
^{2}}{2y}.
\]
\end{cor}

\begin{pf}
For any $n \in\mathbb{N}$, we put $K_{n}=[-n,n]^{2}$. For $n$
large enough so that~$K_{n}$ meets the support of $\nu_{\rho}$, we define
$\nu_{n}=\nu_{\rho}(\cdot|K_{n})$, $\Lambda_{n}$ its log-Laplace and
$I_{n}$ its
Fenchel--Legendre transform. For all $(u,v) \in\mathbb{R}^{2}$,
\[
\Lambda_{n}(u,v)=\ln\int_{K_{n}}e^{ us+vt } \,
d\nu_{\rho}(s,t) - \ln\nu_{\rho
}(K_{n})\leq
\Lambda(u,v)-\ln\nu_{\rho}(K_{n}).
\]
Applying the Fenchel--Legendre transformation, we get
\[
\forall(\varepsilon,y) \in\mathbb{R}^{2}\qquad  I(\varepsilon ,y)\leq
I_{n}(\varepsilon,y)-\ln\nu _{\rho}(K_{n}).
\]
Moreover the measure $\nu_{n}$ has a bounded support, so
Proposition~\ref{inegaliteI-F.0} and the previous inequality imply that
for any $(x,\varepsilon,y)\in\mathbb{R}\times\mathbb{R}\times\,
]0,+\infty[$ such that $0\leq
\varepsilon< x$,
\[
I(\varepsilon,y)-\frac{\varepsilon^{2}}{2y}+\frac{x^{2}}{2y} \leq I_{n}(x,y)-
\ln\nu _{\rho}(K_{n}).
\]
It follows from Proposition~\ref{corMosco} that $(\Lambda_{n})_{n \in
\mathbb{N}}$
Mosco converges to $\Lambda$. Hence, by Mosco's theorem, $(I_{n})_{n
\in\mathbb{N}
}$ Mosco converges to $I$. In particular, for $(x,y) \in\mathbb
{R}^{2}$ such
that $y> 0$ and $x>\varepsilon$, there exists a sequence
$(x_{n},y_{n}) \in\mathbb{R}
^{2}$ converging to $(x,y)$ and such that
\[
\limsup_{n \to+\infty}I_{n}(x_{n},y_{n})
\leq I(x,y).
\]
Since $y> 0$ and $x>\varepsilon$, there exists $n_{0}\geq1$ such that $y_{n}>
0$ and $x_{n}> \varepsilon$ for all $n \geq n_{0}$. Therefore
\[
\forall n \geq n_{0}\qquad  I(\varepsilon,y_{n})-
\frac{\varepsilon
^{2}}{2y_{n}}+\frac
{x_{n}^{2}}{2y_{n}} \leq I_{n}(x_{n},y_{n})-
\ln\nu_{\rho}(K_{n}).
\]
Moreover $\nu_{\rho}(K_{n})$ goes to $1$ when $n$ goes to $+\infty$. Hence
\[
\limsup_{n \to+\infty}I(\varepsilon,y_{n})-
\frac{\varepsilon
^{2}}{2y}+\frac{x^{2}}{2y} \leq I(x,y).
\]
Finally $I$ is lower semi-continuous, thus
\[
\liminf_{n \to+\infty}I(\varepsilon,y_{n}) \geq I(
\varepsilon,y).
\]
This implies the announced inequality.
\end{pf}

We can now show that $I-F$ has a unique minimum on $\Delta^{ *}$ at
$(0,\sigma^{2})$:

\begin{prop}\label{minI-F}
If $\rho$ is a symmetric probability measure on $\mathbb{R}$ with
variance $\sigma^{2}>0$ and such that $\Lambda$ is finite in a
neighborhood of
$(0,0)$, then
\[
(x,y) \in\Delta^{ *} \longmapsto I(x,y)-\frac{x^{2}}{2y}
\]
has a unique minimum at $(0,\sigma^{2})$ where it is equal to~$0$.
\end{prop}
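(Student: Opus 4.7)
The plan is to reduce the problem to the single line $y = \s^{2}$ and to combine proposition~\ref{propJ}(d), lemma~\ref{lemxy} and corollary~\ref{inegaliteI-F}. The Bernoulli case $\r = (\d_{-\s} + \d_{\s})/2$ is already settled by lemma~\ref{casbinomial}, so I may assume that the support of $\r$ contains at least three points, which is what lemma~\ref{lemxy} requires. First, note that $(I - F)(0, \s^{2}) = I(0, \s^{2}) = 0$. Moreover, by the symmetry $I(-x, y) = I(x, y)$ combined with corollary~\ref{inegaliteI-F} applied at $\eps = 0$, for every $(x, y) \in \D^{\!*}$ one has
\[(I - F)(x, y) \, = \, I(|x|, y) - \frac{|x|^{2}}{2y} \, \geq \, I(0, y).\]

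Next I would dispose of the region $y \neq \s^{2}$. The hypothesis that $\L$ is finite in a neighbourhood of $(0, 0)$ means $(0, 0) \in \Dro_{\L}$, and since $\r$ is symmetric with variance $\s^{2}$ the distribution $\nu_{\r}$ has mean $(0, \s^{2})$. Proposition~\ref{propJ}(d) therefore yields that $I$ attains its unique minimum at $(0, \s^{2})$, equal to $0$. In particular $I(0, y) > 0$ for every $y \neq \s^{2}$, so the display above gives $(I - F)(x, y) > 0$ on that region. It remains to show $(I - F)(x, \s^{2}) > 0$ for every $x \neq 0$; by symmetry I restrict to $x > 0$.

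To handle this last case, I would use lemma~\ref{lemxy} to strengthen the weak inequality of the corollary into a strict one on a small initial segment. Since $(0, \s^{2}) = \nabla \L(0, 0) \in A_{I}$ and $A_{I}$ is open by proposition~\ref{Dadmissible}(a), there exists $x_{1} > 0$ such that $(x, \s^{2}) \in A_{I}$ for every $x \in [0, x_{1}]$. On that interval proposition~\ref{Dadmissible}(b) yields $\partial I / \partial x (x, \s^{2}) = u(x, \s^{2})$, and lemma~\ref{lemxy} asserts $u(x, \s^{2}) > x/\s^{2}$ for $x > 0$. Integrating gives
\[I(x_{1}, \s^{2}) \, = \, \int_{0}^{x_{1}} u(t, \s^{2}) \, dt \, > \, \int_{0}^{x_{1}} \frac{t}{\s^{2}} \, dt \, = \, \frac{x_{1}^{2}}{2\s^{2}},\]
so $(I - F)(x_{1}, \s^{2}) > 0$. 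Applying corollary~\ref{inegaliteI-F} with $\eps = x_{1}$ then propagates this to every $x \geq x_{1}$: $(I - F)(x, \s^{2}) \geq (I - F)(x_{1}, \s^{2}) > 0$. Combined with the previous paragraph this gives $(I - F) > 0$ on $\D^{\!*} \setminus \{(0, \s^{2})\}$, so the minimum is attained only at $(0, \s^{2})$.

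The main obstacle I foresee is the last propagation step along $y = \s^{2}$: corollary~\ref{inegaliteI-F} is obtained via a Mosco approximation and delivers only a weak monotonicity, which a priori permits a flat segment where $I(\cdot, \s^{2})$ coincides with $t^{2}/(2\s^{2})$. The leverage comes from lemma~\ref{lemxy} combined with the openness of $A_{I}$ at $(0, \s^{2})$, which supplies the strict derivative comparison $u(x, \s^{2}) > x/\s^{2}$ near $0$; after integration this yields a genuine strict inequality at $x_{1}$, which the corollary then transports all the way.
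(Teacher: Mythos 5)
Your proof is correct and follows essentially the same route as the paper's: both reduce to the line $y=\s^{2}$ via proposition~\ref{propJ}(d), extract strict positivity of $I-F$ near $(0,\s^{2})$ from lemma~\ref{lemxy} together with the openness of $A_{I}$, and propagate outward along the line with corollary~\ref{inegaliteI-F}. The only (harmless) difference is that you integrate the strict inequality $u(t,\s^{2})>t/\s^{2}$ on a small segment, whereas the paper invokes the first-order condition $\nabla I=\nabla F$ at an interior minimum of the ball; just note explicitly that your displayed integration, read with upper limit $x$ instead of $x_{1}$, is what disposes of the points $0<x<x_{1}$.
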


\begin{pf}
Corollary~\ref{inegaliteI-F} implies that
\[
\forall(x,y) \in\Delta^{ *} \qquad I(x,y)-\frac{x^{2}}{2y} \geq I(0,y).
\]
Therefore $I-F$ is a nonnegative function. Since $(0,0) \in\DD_{\Lambda}$,
the function $I(0,\cdot)$ has a unique minimum at $\sigma^{2}$; see
Theorems~25.1 and~27.1~of~\cite{Rockafellar}. As a consequence, if
$I-F$ has a minimum on $\Delta^{ *}$ at $(x_{0},y_{0})$, then
$y_{0}=\sigma
^{2}$ and $I(x_{0},\sigma^{2})=x_{0}^{2}/(2\sigma^{2})$.

Moreover $(0,\sigma^{2}) \in A_{I}$, so there exists $\varepsilon>0$
such that $\mathrm{B}
_{\varepsilon}$, the open ball of radius $\varepsilon$ centered at
$(0,\sigma^{2})$, is
included in $A_{I}$. If $(x,y)$ realizes a minimum of $I-F$ on $\mathrm{B}
_{\varepsilon}$, then
\[
\bigl(u(x,y),v(x,y)\bigr)=\nabla I(x,y)=\nabla F(x,y)= \bigl(x/y,-x^{2}/
\bigl(2y^{2}\bigr) \bigr).
\]
It follows from Lemma~\ref{lemxy} that $x=0$ and thus
$u(x,y)=v(x,y)=0$. Therefore $(x,y)=(0,\sigma^{2})$. Hence
\[
\forall x \in \,]{-}\varepsilon,0[\, \cap\, ]0,\varepsilon[ \qquad I\bigl(x,
\sigma^{2}\bigr)-\frac
{x^{2}}{2\sigma^{2}}>0.
\]
Applying Corollary~\ref{inegaliteI-F} with $\varepsilon/2$, we see
that the
above inequality holds for any $x\neq0$. It follows that $x_{0}=0$.
\end{pf}

\section{Proof of Theorem~\texorpdfstring{\protect\ref{theoCVproba}}{1} with a variant of
Varadhan's lemma}
\label{ProofCVproba}

Let~$\rho$ be a symmetric probability\vspace*{-1pt} measure on $\mathbb{R}$ with positive
variance $\sigma^{2}$ and such that $(0,0) \in\DD_{\Lambda}$. The heuristics
at the end of Section~\ref{theoCV} and Proposition~\ref{minI-F} suggest
that, as $n$ goes to $+\infty$, the law of $(S_{n}/n,T_{n}/n)$ under
$\tilde{\mu}_{n,\rho}$ concentrates exponentially fast on
$(0,\sigma^2)$,
the minimum of $I-F$. Yet, in spite of the expression given in
Proposition~\ref{loiSnTn1}, we cannot apply Varadhan's lemma
(Theorem~II.7.2 of~\cite{Ellis}) directly since $\Delta^{ *}$ is not a
closed set, and $F$ is not continuous on $\Delta$.

In Section~5.5.1, we prove a
variant of Varadhan's lemma. We give the proof of Theorem~\ref
{theoCVproba} in Section~5.5.2.

\subsection{Around Varadhan's lemma}
\label{VaradhanVariant}

\begin{prop}\label{TypeVaradhan}
Let $\rho$ be a probability measure on $\mathbb{R}$. We denote
by~$\tilde{\nu}_{n,\rho}$ the distribution of $(S_{n}/n,T_{n}/n)$
under $\rho^{\otimes n}$. We have
\[
\liminf_{n \to+\infty}\frac{1}{n}\ln\int_{\Delta^{ *}}
\exp \biggl(\frac{nx^{2}}{2y} \biggr) \,d\tilde{\nu}_{n,\rho}(x,y)
\geq0.
\]
Suppose that $\rho$ is nondegenerate, symmetric and that $(0,0) \in
\DD_{\Lambda}$. We assume that there exists $r>0$ such that $M_{r}+\ln
\rho(\{0\}
)<0$ with
\[
M_{r}=\sup \biggl\{ \frac{x^{2}}{2y} \dvtx (x,y)\in\mathcal{C}\cap
\mathrm{B}_{r}\setminus \bigl\{(0,0)\bigr\} \biggr\},
\]
where $\mathrm{B}_{r}$ is the open ball of radius $r$ centered at
$(0,0)$, and
$\mathcal{C}$ is the closed convex hull of $\{ (x,x^{2}) \dvtx x \mbox{ is
in the
support of } \rho \}$. If $A$ is a closed subset of~$\mathbb{R}^{2}$
which does
not contain $(0,\sigma^{2})$, then
\[
\limsup_{n \to+\infty}\frac{1}{n}\ln\int_{\Delta^{ *}\cap
A}
\exp \biggl(\frac
{nx^{2}}{2y} \biggr) \,d\tilde{\nu}_{n,\rho}(x,y)<0.
\]
\end{prop}

Let us give first some sufficient conditions to fulfill the hypothesis
of the proposition. To ensure that there exists $r>0$ such that
$M_{r}+\ln\rho(\{0\})<0$, it is enough that one of the following
conditions is satisfied:
\begin{longlist}[(a)]
\item[(a)] $\rho$ has a density.
\item[(b)] $\rho(\{0\})<1/\sqrt{e}$.
\item[(c)] There exists $c>0$ such that $\rho(]0,c[)=0$.
\item[(d)] $\rho$ is the sum of a finite number of Dirac
masses.
\end{longlist}
Indeed, the function $F$ is bounded by $1/2$ on $\mathcal
{C}\setminus
\{(0,0)\}\subset\Delta^{ *}$. Thus for any $r>0$, $M_{r}\leq1/2$.
Therefore, if $\rho$ has a density, or more generally if $\rho(\{0\}
)<e^{-1/2}$, then for all $r>0$, $M_{r}+\ln\rho(\{0\})<0$.

On the other hand, if there exists $c>0$ such that $]0,c[$ does not
intersect the support of $\rho$ (especially if $\rho$ is the sum of a
finite number of Dirac masses), then
\[
\mathcal{C}\subset\bigl\{ (x,y) \in\mathbb{R}^{2}\dvtx  c|x|\leq y \bigr
\}.
\]
Therefore
\[
\forall(x,y)\in\mathcal{C}\cap\mathrm{B}_{r}\setminus\bigl\{(0,0)
\bigr\}\qquad \frac
{x^{2}}{2y}=\frac{c|x|^{2}}{2cy}\leq\frac{|x|}{2c}\leq
\frac
{r}{2c}.
\]
Hence for any $r>0$, $M_{r}<r/2c$. Since $\rho$ is nondegenerate,
$\rho(\{
0\})<1$. Thus there exists $r>0$ such that $\ln\rho(\{0\})+r/2c<0$.
Therefore conditions (c) and (d) imply that $M_{r}+\ln\rho(\{0\})<0$.

\begin{pf*}{Proof of Proposition~\ref{TypeVaradhan}}
The large deviations principle satisfied by $(\tilde{\nu
}_{n,\rho})_{n\geq1}$ implies that
\begin{eqnarray*}
&& \liminf_{n \to+\infty}\frac{1}{n} \ln\int_{\Delta^{ *}}
\exp \biggl(\frac
{nx^{2}}{2y} \biggr) \,d\tilde{\nu}_{n,\rho}(x,y)
\\
&& \qquad \geq\liminf_{n \to+\infty}\frac{1}{n} \ln\tilde{\nu
}_{n,\rho}\bigl(\Delta^{ *}\bigr)\geq-\inf \bigl\{ I(x,y)\dvtx(x,y)
\in\taa \bigr\}=0.
\end{eqnarray*}
We prove now the second inequality. Let $\alpha>0$. The function $I$ is
lower semi-continuous on $\mathbb{R}^{2}$. Thus there exists a
neighborhood $\mathcal{U}
$ of $(0,0)$ such that
\[
\forall(x,y) \in\overline{\mathcal{U}} \qquad I(x,y) \geq \bigl(I(0,0)-\alpha \bigr)
\wedge\frac{1}{\alpha}= \bigl(-\ln\rho\bigl(\{0\}\bigr)-\alpha \bigr)\wedge
\frac
{1}{\alpha}.
\]
The above equality follows from Lemma~\ref{I(0,0)=}. By hypothesis,
there exists $r>0$ such that $M_{r}+\ln\rho(\{0\})<0$. Thus by choosing
$\alpha$ sufficiently small, we can assume that
\[
M_{r}+\ln\rho\bigl(\{0\}\bigr)+\alpha<0 \quad \mbox{and} \quad M_{r}-
\frac
{1}{\alpha}<0.
\]
Since $M_{r}$ decreases with $r$, we can take $r$ small enough so that
$\mathrm{B}_{r} \subset\mathcal{U}$. Notice next that
$(S_{n}/n,T_{n}/n) \in\mathcal{C}$
almost surely. Therefore, setting $\mathcal{C}^{*}=\mathcal
{C}\setminus\{(0,0)\}$,
\[
\int_{\Delta^{ *}\cap A}\exp \biggl(\frac{nx^{2}}{2y} \biggr) \,d
\tilde{\nu }_{n,\rho}(x,y)=\int_{\mathcal{C}^{*}\cap A}\exp \biggl(
\frac
{nx^{2}}{2y} \biggr) \,d\tilde{\nu}_{n,\rho}(x,y).
\]
Let us decompose
\[
\mathcal{C}^{*}\cap A \subset \bigl(\mathcal{C}^{*}\cap
\mathrm {B}_{r} \bigr)\cup \bigl(\mathcal{C} \cap\mathrm{B}_{r}^{c}
\cap A \bigr).
\]
We have
\[
\int_{\mathcal{C}^{*}\cap\mathrm{B}_{r}} \exp \biggl(\frac
{nx^{2}}{2y} \biggr) \,d
\tilde{\nu}_{n,\rho}(x,y)\leq\exp(nM_{r}) \tilde{\nu
}_{n,\rho
} (\mathcal{U}).
\]
The large deviation principle satisfied by $(\tilde{\nu}_{n,\rho
})_{n\geq1}$ implies that
\begin{eqnarray*}
&&\limsup_{n \to+\infty}\frac{1}{n}\ln\int_{\mathcal{C}^{*}\cap
\mathrm{B}_{r}}
\exp \biggl(\frac
{nx^{2}}{2y} \biggr) \,d\tilde{\nu}_{n,\rho}(x,y)
\\
&& \qquad \leq M_{r}-\inf_{\overline{\mathcal{U}}}I\leq \bigl(M_{r}+
\ln\rho \bigl(\{0\}\bigr)+\alpha \bigr)\vee \biggl( M_{r}-
\frac{1}{\alpha} \biggr).
\end{eqnarray*}
Next, the set $\mathcal{C}\cap\mathrm{B}_{r}^{c} \cap A$ is closed
and does not
contain $(0,0)$. Thus the function~$F$ is continuous on this set.
Moreover $F$ is bounded on $\mathcal{C}^{*}$. Hence Lemma~\ref{VaradhanUpper}
in the \hyperref[appA]{Appendix} and Lemma~1.2.15 of~\cite{DZ} imply that
\begin{eqnarray*}
&& \limsup_{n \to+\infty}\frac{1}{n}\ln\int_{\mathcal{C}^{*}\cap
A}
\exp \biggl(\frac
{nx^{2}}{2y} \biggr) \,d\tilde{\nu}_{n,\rho}(x,y)
\\
&& \qquad \leq\max \biggl(M_{r}+\ln\rho\bigl(\{0\}\bigr)+\alpha,
M_{r}-\frac
{1}{\alpha}, \sup_{\mathcal{C}\cap\mathrm{B}_{r}^{c} \cap A} (F-I) \biggr).
\end{eqnarray*}
Since $\rho$ is symmetric and $(0,0)\in\DD_{\Lambda}$, Proposition~\ref
{minI-F} implies that $G=I-F$ has a unique minimum at $(0,\sigma^{2})$ on
$\Delta^{ *}$.
Suppose that the infimum of $G$ over $\mathcal{C}\cap\mathrm
{B}_{r}^{c}\cap A$ is
null. Then there exists a sequence $(x_{k},y_{k})_{k \in\mathbb{N}}$
in $\mathcal{C}
\cap\mathrm{B}_{r}^{c} \cap A\subset\Delta^{ *}$ such that
\[
\lim_{k \to+\infty}G(x_{k},y_{k})=\inf
_{\mathcal{C}\cap\mathrm{B}_{r}^{c} \cap
A}G=0.
\]
For $k$ large enough,\vspace*{1pt} $G(x_{k},y_{k}) \leq1/2$. Thus $I(x_{k},y_{k})
\leq1$l; that is, $(x_{k},y_{k})$ belongs to the compact set
$\{(u,v) \in\mathbb{R}^{2} \dvtx I(u,v)\leq1 \}$. Up to the extraction of a
subsequence, we suppose that $(x_{k},y_{k})_{k \in\mathbb{N}}$
converges to
some $(x_{0},y_{0})$, which belongs to the closed subset $\mathcal
{C}\cap\mathrm{B}
_{r}^{c} \cap A$. Moreover $G$ is lower semi-continuous, and hence
\[
0=\limsup_{k \to+\infty} G(x_{k},y_{k}) \geq
G(x_{0},y_{0})\geq0.
\]
Therefore $G(x_{0},y_{0})=0$, and thus $(x_{0},y_{0})=(0,\sigma^{2})
\in\mathcal{C}\cap\mathrm{B}_{r}^{c} \cap A$, which is absurd since $A$ does not contain
$(0,\sigma^{2})$. Thus the infimum of $G$ over $\mathcal{C}\cap
\mathrm{B}_{r}^{c}\cap A$
is positive. Therefore
\[
\max \biggl(M_{r}+\ln\rho\bigl(\{0\}\bigr)+\alpha, M_{r}-
\frac{1}{\alpha
}, \sup_{\mathcal{C}
\cap\mathrm{B}_{r}^{c} \cap A}(F-I) \biggr)<0.
\]
This proves the second inequality.
\end{pf*}

\subsection{Proof of Theorem~\texorpdfstring{\protect\ref{theoCVproba}}{1}}
\label{subProofCVproba}

Let $\rho$ be a symmetric probability measure on $\mathbb{R}$ with positive
variance $\sigma^{2}$ and such that
\[
\exists v_0>0 \qquad \int_{\mathbb{R}}e^{v_{0}z^{2}} \,d\rho
(z)<+\infty.
\]
This implies that $\mathbb{R}\times\,]{-}\infty,v_{0}[ \, \subset
D_{\Lambda}$ and thus
$(0,0) \in\DD_{\Lambda}$. We
assume that one of the four conditions given
in the paragraph below Proposition~\ref{TypeVaradhan} is satisfied.

We denote by $\theta_{n,\rho}$ the distribution of $(S_{n}/n,T_{n}/n)$
under $\tilde{\mu}_{n,\rho}$. Let $U$ be an open neighborhood of
$(0,\sigma^{2})$ in $\mathbb{R}^{2}$. Propositions~\ref{loiSnTn1}
and~\ref{TypeVaradhan} imply that
\begin{eqnarray*}
\limsup_{n \to+\infty}\frac{1}{n}\ln\theta_{n,\rho
}
\bigl(U^{c}\bigr)&=&\limsup_{n \to+\infty}\frac{1}{n}
\ln \int_{\Delta^{ *}\cap U^{c}}\exp \biggl(\frac{nx^{2}}{2y} \biggr) \,d
\tilde{\nu}_{n,\rho}(x,y)
\\
&&{}-\liminf_{n \to+\infty}\frac{1}{n}\ln\int
_{\Delta
^{ *}}\exp \biggl(\frac
{nx^{2}}{2y} \biggr)\,d\tilde{\nu}_{n,\rho}(x,y)<0.
\end{eqnarray*}
Hence there exist $\varepsilon>0$ and $n_{0}\in\mathbb{N}$ such that
for any $n>n_{0}$,
\[
\theta_{n,\rho}\bigl(U^{c}\bigr) \leq e^{-n\varepsilon} \mathop{\longrightarrow}_{n \to\infty} 0.
\]
Thus, for each open neighborhood $U$ of $(0,\sigma^{2})$,
\[
\lim_{n \to+\infty}\tilde{\mu}_{n,\rho} \biggl( \biggl(
\frac
{S_{n}}{n},\frac{T_{n}}{n} \biggr)\in U^{c} \biggr)=0.
\]
This means that, under $\tilde{\mu}_{n,\rho}$, $(S_{n}/n,T_{n}/n)$
converges in probability to $(0,\sigma^{2})$. This completes the proof of
Theorem~\ref{theoCVproba}.

\section{Expansion of $I-F$ around its minimum}
\label{Expansion}

In this section, which may be omitted on a first reading, we compute
the expansion of the function $I-F$ around $(0,\sigma^2)$, its minimum over
$\Delta^{ *}$. These computations are crucial because they explain
why the
fluctuations in Theorem~\ref{theoFluctuations} are of order $n^{3/4}$,
and they give us the term in the exponential in the limiting law.

If $\rho$ is a symmetric probability measure whose support contains at
least three points and if $(0,0)\in\DD_{L}$, then $(0,\sigma^{2})=\nabla
\Lambda(0,0) \in\nabla\Lambda(\DD_{\Lambda})=A_{I}$, the admissible domain of $I$.
Proposition~\ref{Dadmissible} in the \hyperref[appA]{Appendix} implies that $I$ is
$\mathrm{C}^{\infty}$ in the neighborhood of $(0,\sigma^{2})$ and that
\begin{eqnarray*}
\nabla I\bigl(0,\sigma^{2}\bigr)&=& \bigl(u\bigl(0,\sigma^{2}
\bigr),v\bigl(0,\sigma^{2}\bigr)\bigr)=(\nabla \Lambda)^{-1}
\bigl(0,\sigma ^{2}\bigr)=(0,0),
\\
\mathrm{D}^{2}_{(0,\sigma^{2})}I &=&  \bigl(\mathrm {D}^{2}_{(0,0)}
\Lambda \bigr)^{-1}= %
\pmatrix{ \sigma^{2} & 0
\vspace*{2pt}
\cr
0 & \mu_{4}-\sigma^{4}}^{-1}=
\pmatrix{ 1/\sigma^{2} & 0 \vspace*{2pt}
\cr
0 & 1/\bigl(
\mu_{4}-\sigma^{4}\bigr)},
\end{eqnarray*}
since $\mathrm{D}^{2}_{(0,0)}\Lambda$ is the covariance
matrix of
$\nu_{\rho}$. Hence, up to the second order, the expansion of $I-F$ in
the neighborhood of $(0,\sigma^{2})$ is
\[
I(x,y)-F(x,y)=\frac{(y-\sigma^{2})^{2}}{2(\mu_{4}-\sigma^{4})}+o\bigl(\bigl\| x,y-\sigma^{2}\bigr\|
^{2}\bigr).
\]
We need to push further the expansion of $I-F$.

Consider the case of the Gaussian $\mathcal{N}(0,\sigma^{2})$. We can
explicitly
compute $I$ in the following way:
\[
\forall(x,y)\in\Delta^{ *} \qquad I(x,y)=\frac{1}{2} \biggl(
\frac
{y}{\sigma
^{2}}-1-\ln \biggl(\frac{y-x^{2}}{\sigma^{2}} \biggr) \biggr).
\]
In the neighborhood of $(0,\sigma^{2})$, we have
\[
I(x,y)-F(x,y)\sim\frac{x^{4}}{4\sigma^{4}}+\frac{(y-\sigma
^{2})^{2}}{4\sigma^{2}}.
\]
In fact, we have a similar expansion in a more general case:

\begin{prop}\label{DLdeI-F}
If $\rho$ is\vspace*{-1.5pt} a symmetric probability measure on $\mathbb{R}$ whose
support contains at least three points and such that $(0,0) \in\DD_{\Lambda}$, then $I$ is $\mathrm{C}^{\infty}$ in the neighborhood of
$(0,\sigma^{2})$.
If
$\mu_{4}$ denotes the fourth moment of $\rho$, then when $(x,y)$ goes to
$(0,\sigma^{2})$,
\[
I(x,y)-\frac{x^{2}}{2y}\sim\frac{(y-\sigma^{2})^{2}}{2(\mu
_{4}-\sigma
^{4})}+\frac{\mu_{4}x^{4}}{12\sigma^{8}}.
\]
\end{prop}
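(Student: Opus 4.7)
By Proposition~\ref{Dadmissible} applied to $\nu_\r$, the point $(0,\s^2)=\nabla\L(0,0)$ lies in the admissible domain $A_I$ on which $I$ is $\Ck{\infty}$; and $F(x,y)=x^2/(2y)$ is smooth near $(0,\s^2)$ since $\s^2>0$. The plan is to compute the Taylor expansion of $I-F$ around $(0,\s^2)$ in the variables $(x,t)=(x,y-\s^2)$ up to order $4$, and then identify the surviving leading-order terms. The key structural input is the symmetry of $\r$: the function $\L(u,v)$ is even in $u$, hence $I(x,y)$ is even in $x$, and by Proposition~\ref{Dadmissible} the inverse map $(x,y)\mapsto (u^*(x,y),v^*(x,y))=(\nabla\L)^{-1}(x,y)$ has $u^*$ odd in $x$ and $v^*$ even in $x$.

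First I would Taylor expand $\L(u,v)$ at $(0,0)$ up to order $4$, keeping only the nonzero coefficients (those of $u^{2k}v^j$). A direct computation of the partial derivatives in terms of the moments of $\r$ yields
\[\L(u,v)=\s^2 v+\tfrac{\s^2}{2}u^2+\tfrac{\mu_4-\s^4}{2}v^2+\tfrac{\mu_4-\s^4}{2}u^2 v+\tfrac{\mu_4-3\s^4}{24}u^4+c\,v^3+O(\|(u,v)\|^5),\]
where $c$ depends only on $\mu_6$ and $\mu_4$ (and will not enter the final answer). Next, invert $\nabla\L(u,v)=(x,\s^2+t)$ perturbatively via the implicit function theorem, using parity: $u^*$ odd in $x$, $v^*$ even in $x$. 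One finds
\[u^*(x,t)=\frac{x}{\s^2}-\frac{xt}{\s^4}+\frac{\mu_4}{3\s^8}x^3+o(|x|^3+|xt|),\qquad v^*(x,t)=\frac{t}{\mu_4-\s^4}-\frac{x^2}{2\s^4}+O(x^2|t|+t^2).\]
The coefficient $\mu_4/(3\s^8)$ of $x^3$ in $u^*$ is the numerical heart of the statement: after integration it produces the claimed $x^4$ term.

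Finally, Proposition~\ref{Dadmissible} gives $\partial_x I(x,\s^2+t)=u^*(x,t)$, and integrating in $x$ at fixed $t$ (using the one-dimensional expansion $I(0,\s^2+t)=t^2/(2(\mu_4-\s^4))+O(t^3)$ obtained by Legendre-dualizing $v\mapsto\L(0,v)$), I get
\[I(x,\s^2+t)=\frac{t^2}{2(\mu_4-\s^4)}+\frac{x^2}{2\s^2}-\frac{x^2 t}{2\s^4}+\frac{\mu_4}{12\s^8}x^4+R(x,t).\]
Subtracting the explicit Taylor expansion $F(x,\s^2+t)=\tfrac{x^2}{2\s^2}-\tfrac{x^2 t}{2\s^4}+\tfrac{x^2 t^2}{2\s^6}+\cdots$ cancels the middle two terms exactly, leaving the two claimed dominant terms. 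The main obstacle is careful bookkeeping of remainders: to upgrade the equality $I-F=[\text{RHS}]+O(x^6+x^4|t|+x^2t^2+t^3)$ into the asymptotic equivalence stated, one must check that every omitted contribution is $o(x^4+t^2)=o([\text{RHS}])$ as $(x,t)\to(0,0)$. Each case is elementary, e.g.\ $x^2 t^2\le (x^4+t^4)/2=o(x^4+t^2)$ and $x^4|t|\le |t|\cdot(x^4+t^2)=o(x^4+t^2)$, but all of them need to be verified to conclude.
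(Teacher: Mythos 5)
Your proposal is correct and reaches the right constants; it shares the paper's overall strategy (use proposition~\ref{Dadmissible} to get local smoothness and the duality $\nabla I=(\nabla\L)^{-1}$, then extract the fourth-order jet of $I$ at $(0,\s^{2})$ from the fourth-order jet of $\L$ at $(0,0)$), but the computational path is genuinely different. The paper writes down the generic degree-4 Taylor expansion of $I-F$ with unknown coefficients $a_{i,j}$, reduces the claim to $a_{2,1}=0$ and $a_{4,0}=\mu_{4}/(12\s^{8})$, and obtains $\partial^{3}I/\partial x^{2}\partial y$ and $\partial^{4}I/\partial x^{4}$ by repeatedly differentiating the identity $\mathrm{D}^{2}I=(\mathrm{D}^{2}\L)^{-1}$ through the auxiliary functions $f_{j}(u,v)$ --- a long chain-rule computation. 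You instead expand $\L$ directly (the cumulant coefficients $\tfrac{\s^{2}}{2}u^{2}$, $\tfrac{\mu_{4}-\s^{4}}{2}v^{2}$, $\tfrac{\mu_{4}-\s^{4}}{2}u^{2}v$, $\tfrac{\mu_{4}-3\s^{4}}{24}u^{4}$ are all correct), invert $\nabla\L$ as a power series to get $u^{*}$ with the crucial $x^{3}$-coefficient $\mu_{4}/(3\s^{8})$, and integrate $\partial_{x}I=u^{*}$; this is shorter and less error-prone, at the cost of having to track the integration remainders, which you do correctly. Two small points. First, your displayed expansion of $\L$ is not literally an identity with remainder $O(\|(u,v)\|^{5})$: the degree-4 monomials $u^{2}v^{2}$ and $v^{4}$ also carry nonzero coefficients. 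This is harmless --- in the relevant anisotropic scaling $u\sim x$, $v\sim t\sim x^{2}$ they only affect $u^{*}$ and $v^{*}$ beyond the orders you state --- but you should either include them or phrase the truncation in terms of that scaling. Second, the final bookkeeping you flag as "to be verified" is indeed exactly what the paper also has to check (negligibility of $xh^{2},h^{3},x^{3}h,x^{2}h^{2},xh^{3},h^{4}$ against $a_{4,0}x^{4}+a_{0,2}h^{2}$), and your sample estimates are the right ones; a complete write-up should also note that the odd-in-$x$ coefficients vanish by the symmetry of $I$ in its first variable, which you have available since $u^{*}$ is odd in $x$.
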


\begin{pf}
If $(0,0) \in\DD_{\Lambda}$,
then $(0,\sigma^{2})=\nabla\Lambda(0,0)
\in\nabla\Lambda(\DD_{\Lambda
})=A_{I}$, and Proposition~\ref{Dadmissible} in
the \hyperref[appA]{Appendix} implies that the function $I$ is $\mathrm{C}^{\infty}$
on $A_{I}$.
Moreover, if we denote the inverse function of $\nabla\Lambda$ by
$(x,y)\longmapsto(u(x,y),v(x,y))$, then, for all $(x,y) \in A_{I}$,
\[
\nabla I(x,y)=\bigl(u(x,y),v(x,y)\bigr)\quad \mbox{and}\quad \mathrm{D}^{2}_{(x,y)}I=
\bigl(\mathrm{D}^{2}_{(u(x,y),v(x,y))}\Lambda \bigr)^{-1}.
\]
The hypothesis $(0,0) \in\DD_{\Lambda}$ also implies that $\rho$ has finite
moments of all orders. The expansion of $F$ to the fourth order in the
neighborhood of $(0,\sigma^{2})$ is
\[
F(x,y)=\frac{x^{2}}{2\sigma^{2}}-\frac{x^{2}(y-\sigma^{2})}{2\sigma
^{4}}+\frac
{x^{2}(y-\sigma^{2})^{2}}{2\sigma^{6}}+o\bigl(\bigl\|x,y-
\sigma^{2}\bigr\|^{4}\bigr).
\]
Therefore, in the neighborhood of $(0,0)$,
\begin{eqnarray*}
&& I\bigl(x,h+\sigma^{2}\bigr)-F\bigl(x,h+\sigma^{2}\bigr)\\
&&\qquad=
\frac{h^{2}}{2(\mu_{4}-\sigma
^{4})}+a_{3,0}x^{3}+a_{2,1}x^{2}h+a_{1,2}xh^{2}+a_{0,3}h^{3}
\\
&&\qquad\quad{}+a_{4,0}x^{4}+a_{3,1}x^{3}h+a_{2,2}x^{2}h^{2}+a_{1,3}xh^{3}+a_{0,4}h^{4}+o
\bigl(\| x,h\|^{4}\bigr),
\end{eqnarray*}
with, for any $(i,j) \in\mathbb{N}$ such that $i+j\in\{3,4\}$,
\[
a_{i,j}=\frac{1}{i!j!}\frac{\partial^{i+j}I}{\partial x^{i}\,\partial
y^{j}}\bigl(0,
\sigma^{2}\bigr),
\]
except for
\[
a_{2,1}=\frac{1}{2} \biggl(\frac{\partial^{3}I}{\partial
x^{2}\,\partial
y}\bigl(0,
\sigma^{2}\bigr)+\frac{1}{\sigma^{4}} \biggr) \quad\mbox{and}\quad
a_{2,2}=\frac{1}{4}\frac{\partial^{4}I}{\partial x^{2}\,\partial
y^{2}}\bigl(0,
\sigma^{2}\bigr)-\frac{1}{2\sigma^{6}}.
\]
If we prove that $a_{4,0}>0$, then the terms $xh^{2}$, $h^{3}$,
$x^{3}h$, $x^{2}h^{2}$, $xh^{3}$ and $h^{4}$ are negligible compared to
$a_{4,0}x^{4}+a_{0,2}h^{2}$ when $(x,h)$ goes to $(0,0)$.
Next, the symmetry of $I-F$ in the first variable implies that
$a_{3,0}=0$. If we show that $a_{2,1}=0$, then when $(x,y)$ goes to
$(0,\sigma^{2})$,
\[
I(x,y)-F(x,y)= \biggl(\frac{(y-\sigma^{2})^{2}}{2(\mu_{4}-\sigma
^{4})}+a_{4,0}x^{4}
\biggr) \bigl(1+o(1)\bigr).
\]
To conclude it is enough to show that $a_{2,1}=0$ and $a_{4,0}=\mu
_{4}/(12\sigma^{8})$, that is,
\[
\frac{\partial^{3}I}{\partial x^{2}\,\partial y}\bigl(0,\sigma^{2}\bigr)=-\frac
{1}{\sigma
^{4}}\quad
\mbox{and}\quad \frac{\partial^{4}I}{\partial
x^{4}}\bigl(0,\sigma ^{8}\bigr)=
\frac{2\mu_{4}}{\sigma^{2}}.
\]
For any $j \in\mathbb{N}$, we introduce the function $f_{j}$ defined
on $\DD_{\Lambda}$ by
\[
\forall(u,v) \in\DD_{\Lambda}\qquad f_{j}(u,v)=
\int_{\mathbb{R}
}x^{j}e^{ux+vx^{2}} \,d\rho(x) \biggl(
\int_{\mathbb
{R}}e^{ux+vx^{2}} \,d\rho(x) \biggr)^{-1}.
\]
These functions are $\mathrm{C}^{\infty}$ on $\DD_{\Lambda}$, and they verify the
following properties:
\begin{longlist}[$\star$]
\item[$\star$]  $f_{0}$ is the identity function on $\mathbb{R}^{2}$ and
\[
f_{1}=\frac{\partial\Lambda}{\partial u} \quad\mbox{and}\quad f_{2}=
\frac{\partial\Lambda}{\partial v}.
\]
\item[$\star$]  For all $j \in\mathbb{N}$, $f_{j}(0,0)=\mu_{j}$ is the
$j$th moment of
$\rho$. It is null if $j$ is odd, since $\rho$ is symmetric.
Moreover, for
any $j\in\mathbb{N}$,
\[
\frac{\partial f_{j}}{\partial u}=f_{j+1}-f_{j}f_{1}\quad \mbox{and}\quad
\frac{\partial f_{j}}{\partial v}=f_{j+2}-f_{j}f_{2}.
\]
Therefore, for all $(x,y) \in A_{I}$,
\begin{eqnarray*}
\mathrm{D}^{2}_{(x,y)}I&= & \bigl(\mathrm {D}^{2}_{(u(x,y),v(x,y))}
\Lambda \bigr)^{-1}
\\
&= &
\pmatrix{ f_{2}-f_{1}^{2} &
f_{3}-f_{1}f_{2} \vspace*{2pt}
\cr
f_{3}-f_{1}f_{2} & f_{4}-f_{2}^{2}}^{-1}
\bigl(u(x,y),v(x,y)\bigr).
\end{eqnarray*}
Denoting by
$g=(f_{2}-f_{1}^{2})(f_{4}-f_{2}^{2})-(f_{3}-f_{1}f_{2})^{2}$, the
determinant of the positive definite symmetric matrix $\mathrm
{D}^{2}\Lambda
$, we get that for any $(x,y) \in A_{I}$,
\[
\mathrm{D}^{2}_{(x,y)}I=\frac{1}{g(u(x,y),v(x,y))} \pmatrix{
f_{4}-f_{2}^{2} & f_{1}f_{2}-f_{3}
\vspace*{2pt}
\cr
f_{1}f_{2}-f_{3} &
f_{2}-f_{1}^{2}} \bigl(u(x,y),v(x,y)\bigr).
\]
Moreover $(u(0,\sigma^{2}),v(0,\sigma^{2}))=(0,0)$ thus
\begin{eqnarray*}
\frac{\partial u}{\partial x}\bigl(0,\sigma^{2}\bigr) &=& \frac{\partial^{2}
I}{\partial
x^{2}}\bigl(0,
\sigma^{2}\bigr)=\frac{f_{4}-f_{2}^{2}}{g}(0,0)=\frac{\mu
_{4}-\sigma^{4}}{\sigma
^{2}(\mu_{4}-\sigma^{4})}=
\frac{1}{\sigma^{2}},
\\
\frac{\partial v}{\partial y}\bigl(0,\sigma^{2}\bigr)&=& \frac{\partial^{2}
I}{\partial
y^{2}}
\bigl(0,\sigma^{2}\bigr)=\frac{f_{2}-f_{1}^{2}}{g}(0,0)=\frac{\sigma
^{2}}{\sigma^{2}(\mu
_{4}-\sigma^{4})}=
\frac{1}{\mu_{4}-\sigma^{4}},
\\
\frac{\partial u}{\partial y}\bigl(0,\sigma^{2}\bigr)&=&\frac{\partial
v}{\partial
x}
\bigl(0,\sigma^{2}\bigr)=\frac{\partial^{2} I}{\partial x\, \partial
y}\bigl(0,\sigma
^{2}\bigr)=\frac{f_{1}f_{2}-f_{3}}{g}(0,0)=0.
\end{eqnarray*}
Differentiating with respect to $y$, we get
\[
\frac{\partial^{3} I}{\partial y\,\partial x^{2}}= \frac{\partial
u}{\partial y}\times\frac{\partial}{\partial u} \biggl(
\frac
{f_{4}-f_{2}^{2}}{g} \biggr) (u,v)+\frac{\partial v}{\partial y}\times \frac{\partial}{\partial v}
\biggl(\frac{f_{4}-f_{2}^{2}}{g} \biggr) (u,v).
\]
The first term of the addition, taken at $(0,\sigma^{2})$, is null.
For the
second term, we need to compute the partial derivative of
$(f_{4}-f_{2}^{2})/g$ with respect to $v$,
\begin{eqnarray*}
\frac{\partial}{\partial v} \biggl(\frac{f_{4}-f_{2}^{2}}{g} \biggr)&=&\frac{1}{g}
\times\frac{\partial}{\partial v} \bigl(f_{4}-f_{2}^{2}
\bigr)-\frac{f_{4}-f_{2}^{2}}{g^{2}}\times\frac
{\partial g}{\partial v}
\\
&=&\frac{f_{6}-3f_{2}f_{4}+2f_{2}^{3}}{g}-\frac
{f_{4}-f_{2}^{2}}{g^{2}}\times\frac{\partial g}{\partial v}.
\end{eqnarray*}
Let us differentiate with respect to $v$,
\begin{eqnarray*}
\frac{\partial g}{\partial
v}&=& f_{2}(f_{6}-f_{4}f_{2})+f_{4}
\bigl(f_{4}-f_{2}^{2}\bigr)-f_{1}^{2}(f_{6}-f_{4}f_{2})
\\
&&{}-2f_{4}f_{1}(f_{3}-f_{1}f_{2})-3f_{2}^{2}
\bigl(f_{4}-f_{2}^{2}\bigr)-2f_{3}(f_{5}-f_{3}f_{2})
\\
&&{}+2f_{1}f_{2}(f_{5}-f_{3}f_{2})+2f_{2}f_{3}(f_{3}-f_{1}f_{2})+2f_{1}f_{3}
\bigl(f_{4}-f_{2}^{2}\bigr).
\end{eqnarray*}
Taken at $(0,0)$, each term with even subscript vanishes and we have
\begin{eqnarray*}
\frac{\partial g}{\partial v}(0,0)&=&\sigma^{2}\bigl(\mu_{6}-\mu
_{4}\sigma^{2}\bigr)+\mu _{4}\bigl(
\mu_{4}-\sigma^{4}\bigr)-3\sigma^{4}\bigl(
\mu_{4}-\sigma^{4}\bigr)
\\
&=& \sigma^{2}\mu_{6}-3\mu_{4}
\sigma^{4}+2\sigma^{8}+\bigl(\mu _{4}-
\sigma^{4}\bigr)^{2}.
\end{eqnarray*}
Finally\vspace*{-2pt}
\begin{eqnarray*}
&&
\frac{\partial}{\partial v} \biggl(\frac{f_{4}-f_{2}^{2}}{g} \biggr) (0,0)
\\[-2pt]
&& \qquad =\frac{\mu_{6}-3\sigma^{2}\mu_{4}+2\sigma^{6}}{\sigma^{2}(\mu
_{4}-\sigma^{4})}-\frac{\sigma
^{2}\mu_{6}-3\mu_{4}\sigma^{4}+2\sigma^{8}+(\mu_{4}-\sigma
^{4})^{2}}{\sigma^{4}(\mu
_{4}-\sigma^{4})},
\end{eqnarray*}
which is equal to $(\sigma^{4}-\mu_{4})/\sigma^4$ after
simplification. Therefore
\begin{eqnarray*}
\frac{\partial^{3} I}{\partial y\,\partial x^{2}}\bigl(0,\sigma ^{2}\bigr) &=& 0+\frac
{\partial v}{\partial y}
\bigl(0,\sigma^{2}\bigr) \frac{\partial}{\partial
v} \biggl(\frac{f_{4}-f_{2}^{2}}{g}
\biggr) (0,0)
\\[-2pt]
&=& \frac{1}{\mu_{4}-\sigma^{4}}\times\frac{\sigma^{4}-\mu
_{4}}{\sigma^{4}}=-\frac
{1}{\sigma^{4}}.
\end{eqnarray*}
This is what we wanted to prove. Let us compute now the fourth partial
derivative of $I$ with respect to $x$. We have to obtain first an
expression of the third partial derivative of $I$ with respect to $x$,
\[
\frac{\partial^{3} I}{\partial x^{3}}= \frac{\partial u}{\partial
x}\times\frac{\partial}{\partial u} \biggl(
\frac
{f_{4}-f_{2}^{2}}{g} \biggr) (u,v)+\frac{\partial v}{\partial x}\times \frac{\partial}{\partial v}
\biggl(\frac{f_{4}-f_{2}^{2}}{g} \biggr) (u,v).
\]
The only term we do not know is the partial derivative with respect to
$u$ of $(f_{4}-f_{2}^{2})/g$. We have
\begin{eqnarray*}
\frac{\partial}{\partial u} \biggl(\frac{f_{4}-f_{2}^{2}}{g} \biggr) &=& \frac{1}{g}
\times\frac{\partial}{\partial u} \bigl(f_{4}-f_{2}^{2}
\bigr)-\frac{f_{4}-f_{2}^{2}}{g^{2}}\times\frac
{\partial g}{\partial u}
\\
&=&\frac{f_{5}-f_{4}f_{1}-2f_{2}f_{3}+2f_{2}^{2}f_{1}}{g}-\frac
{f_{4}-f_{2}^{2}}{g^{2}}\times\frac{\partial g}{\partial
u},
\end{eqnarray*}
with\vspace*{-2pt}
\begin{eqnarray*}
\frac{\partial g}{\partial
u} &=& f_{2}(f_{5}-f_{4}f_{1})+f_{4}(f_{3}-f_{2}f_{1})-f_{1}^{2}(f_{5}-f_{4}f_{1})
\\[-2pt]
&&{}-2f_{4}f_{1}\bigl(f_{2}-f_{1}^{2}
\bigr)-3f_{2}^{2}(f_{3}-f_{2}f_{1})-2f_{3}(f_{4}-f_{3}f_{1})
\\[-2pt]
&&{}+2f_{1}f_{2}(f_{4}-f_{3}f_{1})+2f_{2}f_{3}
\bigl(f_{2}-f_{1}^{2}\bigr)+2f_{1}f_{3}(f_{3}-f_{2}f_{1}).
\end{eqnarray*}
Notice that this quantity vanishes at $(0,0)$. Therefore the partial
derivative of $(f_{4}-f_{2}^{2})/g$ with respect to $u$, taken at
$(0,0)$, is null as well, and we get back that the third partial
derivative of $I$ with respect to $x$, taken at $(0,\sigma^{2})$, is null.
Differentiating once more, we obtain
\begin{eqnarray*}
\frac{\partial^{4}I}{\partial x^{4}} &=& \frac{\partial^{2}
u}{\partial x^{2}}\times\frac{\partial}{\partial u} \biggl(
\frac
{f_{4}-f_{2}^{2}}{g} \biggr) (u,v)+\frac{\partial^{2} v}{\partial
x^{2}}\times\frac{\partial}{\partial v}
\biggl(\frac{f_{4}-f_{2}^{2}}{g} \biggr) (u,v)
\\[-1pt]
&&{}+\frac{\partial u}{\partial x}\times \biggl(\frac
{\partial
u}{\partial x}\times\frac{\partial^{2} }{\partial u^{2}}
\biggl(\frac
{f_{4}-f_{2}^{2}}{g} \biggr) (u,v)+\frac{\partial v}{\partial x}\times
\frac{\partial^{2} }{\partial v \,\partial u} \biggl(\frac
{f_{4}-f_{2}^{2}}{g} \biggr) (u,v) \biggr)
\\[-1pt]
&&{}+\frac{\partial v}{\partial x}\times \biggl(\frac
{\partial
u}{\partial x}\times\frac{\partial^{2} }{\partial u \,\partial v}
\biggl(\frac{f_{4}-f_{2}^{2}}{g} \biggr) (u,v)+\frac{\partial v}{\partial
x}\times
\frac{\partial^{2} }{\partial v^{2}} \biggl(\frac
{f_{4}-f_{2}^{2}}{g} \biggr) (u,v) \biggr).
\end{eqnarray*}
Let us compute it at $(0,\sigma^{2})$,
\begin{eqnarray*}
\frac{\partial^{4} I}{\partial x^{4}}\bigl(0,\sigma^{2}\bigr)&=& \frac{1}{\sigma
^{2}}
\biggl(\frac{1}{\sigma^{2}}\frac{\partial^{2} }{\partial u^{2}} \biggl(\frac
{f_{4}-f_{2}^{2}}{g} \biggr)
(0,0)+0 \biggr)
\\
&&{}+0+\frac{\sigma^{4}-\mu_{4}}{\sigma^{4}}\frac{\partial^{2}
v}{\partial
x^{2}}\bigl(0,\sigma^{2}\bigr)+0,
\end{eqnarray*}
with
\[
\frac{\partial^{2} v}{\partial x^{2}}\bigl(0,\sigma^{2}\bigr)=\frac{\partial
}{\partial x}
\biggl(\frac{\partial^{2} I}{\partial x \,\partial
y} \biggr) \bigl(0,\sigma^{2}\bigr)=
\frac{\partial^{3} I}{\partial x^{2}\, \partial
y}\bigl(0,\sigma ^{2}\bigr)=-\frac{1}{\sigma^{4}}
\]
and
\begin{eqnarray*}
\frac{\partial^{2} }{\partial u^{2}} \biggl(\frac
{f_{4}-f_{2}^{2}}{g} \biggr)&=& \frac{1}{g}
\frac{\partial
^{2}}{\partial
u^{2}}\bigl(f_{4}-f_{2}^{2}\bigr)-
\frac{2}{g^{2}}\frac{\partial g }{\partial
u} \frac{\partial}{\partial u}\bigl(f_{4}-f_{2}^{2}
\bigr)
\\
&&{}-\frac{f_{4}-f_{2}^{2}}{g^{2}}\frac{\partial^{2} g }{\partial
u^{2}}+\frac{2}{g^{3}} \biggl(
\frac{\partial g }{\partial u} \biggr)^{2}\bigl(f_{4}-f_{2}^{2}
\bigr).
\end{eqnarray*}
Hence
\[
\frac{\partial^{2} }{\partial u^{2}} \biggl(\frac
{f_{4}-f_{2}^{2}}{g} \biggr) (0,0)=\frac{1}{\sigma^{4}(\mu_{4}-\sigma
^{4})}
\biggl(\sigma^{2}\frac{\partial^{2}}{\partial
u^{2}}\bigl(f_{4}-f_{2}^{2}
\bigr) (0,0)-\frac
{\partial^{2} g }{\partial u^{2}}(0,0) \biggr).
\]
The two remaining terms are the derivatives of quantities which we have
already computed. In the following, we evaluate them directly at
$(0,0)$, which is straightforward since $f_{j}(0,0)=0$ when $j$ is odd:
\[
\frac{\partial^{2}}{\partial u^{2}}\bigl(f_{4}-f_{2}^{2}\bigr)
(0,0)=\frac
{\partial
}{\partial u}\bigl(f_{5}-f_{4}f_{1}-2f_{2}f_{3}+2f_{2}^{2}f_{1}
\bigr) (0,0)=\mu _{6}-3\sigma^{2}\mu_{4}+2
\sigma^{6}
\]
and
\begin{eqnarray*}
\frac{\partial^{2} g }{\partial u^{2}}(0,0) &=& \frac{\partial
}{\partial
u} \biggl(\frac{\partial g }{\partial u} \biggr)
(0,0)=\sigma^{2}\bigl(\mu _{6}-\mu _{4}
\sigma^{2}\bigr)+\mu_{4}\bigl(\mu_{4}-
\sigma^{4}\bigr)-0-2\mu_{4}\sigma^{4}
\\
&&{}-3\sigma^{4}\bigl(\mu_{4}-\sigma^{4}\bigr)-2
\mu_{4}^{2}+2\sigma^{4}\mu _{4}+2
\sigma^{4}\mu_{4}+0.
\end{eqnarray*}
This is equal to $\sigma^{2}\mu_{6}-\mu_{4}^{2}+3\sigma^{8}-3\mu
_{4}\sigma^{4}$
after simplification. Thus we have
\begin{eqnarray*}
\frac{\partial^{2} }{\partial u^{2}} \biggl(\frac
{f_{4}-f_{2}^{2}}{g} \biggr) (0,0)&=&
\frac{\sigma^{2}\mu_{6}-3\sigma
^{4}\mu_{4}+2\sigma
^{8}-\sigma^{2}\mu_{6}+\mu_{4}^{2}-3\sigma^{8}+3\mu_{4}\sigma
^{4}}{\sigma^{4}(\mu_{4}-\sigma
^{4})}
\\
&=&\frac{\mu_{4}^{2}-\sigma^{8}}{\sigma^{4}(\mu_{4}-\sigma
^{4})}=\frac{\mu_{4}+\sigma
^{4}}{\sigma^{4}}.
\end{eqnarray*}
Finally
\[
\frac{\partial^{2} I}{\partial x^{4}}\bigl(0,\sigma^{2}\bigr)=\frac{\mu
_{4}+\sigma^{4}}{\sigma
^{8}}-
\frac{\sigma^{4}-\mu_{4}}{\sigma^{8}}=\frac{2\mu
_{4}}{\sigma^{8}}.
\]
\end{longlist}
We obtain the announced term and the proof is complete.
\end{pf}

\section{Proof of Theorem~\texorpdfstring{\protect\ref{theoFluctuations}}{2}}
\label{ProofFluctuations}

We first give conditions on the probability measure $\rho$ in order to
apply Theorem~\ref{exp(-nI)} (see Appendix~\ref{appA}) to the distribution~$\nu
_{\rho}$. We will use then Laplace's method, as we announced in the
heuristics of Section~\ref{theoCV}, to obtain the fluctuations
Theorem~\ref{theoFluctuations}. The proof will rely on the expansion of
$I-F$ around $(0,\sigma^{2})$ given in Proposition~\ref{DLdeI-F}. We will
also use the variant of Varadhan's lemma, stated in Proposition~\ref
{TypeVaradhan}. We start with the following lemma:

\begin{lem}
If $\rho$ has a probability density $f$ with respect to the
Lebesgue measure on $\mathbb{R}$, then $\nu_{\rho}^{*2}$ has the density
\[
f_{2}\dvtx  (x,y)\longmapsto\frac{1}{\sqrt{2y-x^{2}}} f \biggl(
\frac
{x+\sqrt
{2y-x^{2}}}{2} \biggr) f \biggl(\frac{x-\sqrt{2y-x^{2}}}{2} \biggr)
\mathbh{1}_{x^{2}<2y}
\]
with respect to the Lebesgue measure on $\mathbb{R}^{2}$.
\end{lem}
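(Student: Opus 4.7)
The plan is to recognize that $\nu_{\r}^{*2}$ is the joint law of $(Z_1+Z_2,\,Z_1^2+Z_2^2)$ where $Z_1,Z_2$ are independent with common density $f$, and then to compute this density by a change of variables.

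First I would test the bijective character of the map
\[\Phi:(z_1,z_2)\in\R^2\longmapsto(z_1+z_2,\,z_1^2+z_2^2)\in\R^2\]
on the open set $\{z_1\neq z_2\}$ (whose complement has Lebesgue measure zero). Given $(x,y)$ with $2y-x^2>0$, solving the system $z_1+z_2=x$, $z_1 z_2=(x^2-y)/2$ shows that $z_1$ and $z_2$ are the two roots
\[z_\pm=\frac{x\pm\sqrt{2y-x^2}}{2}\]
of $t^2-xt+(x^2-y)/2$, so $\Phi$ is exactly two-to-one (the two preimages differ by a swap) onto $\{x^2<2y\}$, and the complement $\{x^2\geq 2y\}$ is not reached since $(z_1-z_2)^2=2y-x^2$.

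Next I would compute the Jacobian of $\Phi$:
\[\det\begin{pmatrix}1&1\\2z_1&2z_2\end{pmatrix}=2(z_2-z_1),\]
whose modulus at the two preimages equals $2\sqrt{2y-x^2}$. By the standard change of variables formula for a finite-to-one $\mathcal{C}^1$ map (applied with the two preimages contributing equally by the symmetry $(z_1,z_2)\leftrightarrow(z_2,z_1)$, and observing that the singular locus $\{z_1=z_2\}$ carries no mass because $f(z_1)f(z_2)\,dz_1\,dz_2$ is absolutely continuous), the density of $\Phi_{*}(f(z_1)f(z_2)\,dz_1\,dz_2)$ at $(x,y)$ is
\[f_2(x,y)=\frac{f(z_+)f(z_-)+f(z_-)f(z_+)}{2\sqrt{2y-x^2}}\,\ind{\{x^2<2y\}}=\frac{f(z_+)f(z_-)}{\sqrt{2y-x^2}}\,\ind{\{x^2<2y\}},\]
which is exactly the formula in the statement.

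The computation is entirely routine; the only point that requires a line of care is justifying the change of variables (nullity of the diagonal $\{z_1=z_2\}$ and the two-sheeted covering), and checking that no mass escapes the region $\{x^2<2y\}$, which follows from the identity $(z_1-z_2)^2=2(z_1^2+z_2^2)-(z_1+z_2)^2\geq 0$.
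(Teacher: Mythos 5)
Your proof is correct and follows essentially the same route as the paper: the paper simply realizes the two-to-one change of variables by splitting $\R^2$ into the half-planes $\{z_1>z_2\}$ and $\{z_1<z_2\}$ and performing two one-to-one substitutions, each contributing half of the density, which is exactly your two-sheeted covering argument. All the computations (the roots $z_\pm$, the Jacobian $2\sqrt{2y-x^2}$, the nullity of the diagonal) match.
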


\begin{pf}
Let $h$ be a bounded continuous function from $\mathbb{R}^{2}$ to
$\mathbb{R}$. We have
\begin{eqnarray*}
\int_{\mathbb{R}^{2}}h(x,y) \,d\nu_{\rho}^{*2}(x,y)
&=&
\int_{\mathbb{R}
^{2}}h\bigl(\bigl(z,z^{2}\bigr)+
\bigl(t,t^{2}\bigr)\bigr) \,d\rho(z) \,d\rho(t)
\\
&=& \int_{D^{+}}h\bigl(z+t,z^{2}+t^{2}
\bigr)f(z)f(t) \,dz \,dt\\
&&{}+\int_{D^{-}}h\bigl(z+t,z^{2}+t^{2}
\bigr)f(z)f(t) \,dz \,dt,
\end{eqnarray*}
with $D^{+}=\{ (z,t)\in\mathbb{R}^{2}\dvtx z> t \}$ and $D^{-}=\{
(z,t)\in\mathbb{R}
^{2}\dvtx z< t \}$. Indeed, the Lebesgue measure of the set $\{ (z,t)\in
\mathbb{R}
^{2}\dvtx z=t \}$ is null. Let us denote, respectively, by $I_{+}$ and
$I_{-}$ the two previous integrals.

We define $\phi\dvtx  (z,t) \in\mathbb{R}^{2} \longmapsto
(u,v)=(z+t,z^{2}+t^{2})$. It is a one\vspace*{1pt} to one map from $D^{+}$ (resp.,
from $D^{-}$) onto $\Delta_{2}=\{ (u,v)\in\mathbb{R}^{2}\dvtx u^{2}<2v
\}$. Moreover
$\phi$ is~$\mathrm{C}^{1}$ on $D^{+}\cup D^{-}$, and its Jacobian in
$(z,t)$ is
$2|z-t|=2\sqrt{2v-u^{2}}\neq0$. The change of variables given by
$\phi
$ yields
\[
I_+=\int_{\Delta_{2}} h(u,v)\frac{1}{2\sqrt{2v-u^{2}}} f \biggl(
\frac
{u+\sqrt{2v-u^{2}}}{2} \biggr) f \biggl(\frac{u-\sqrt
{2v-u^{2}}}{2} \biggr) \,du \,dv,
\]
and $I_{-}=I_{+}$. By adding theses two terms, we get the lemma.
\end{pf}

By\vspace*{1.5pt} Theorem~\ref{exp(-nI)} in the \hyperref[appA]{Appendix}, the expansion of $g_{n}$
holds as soon as there exists $q \in [1,+\infty[$ such that
$\smash{\widehat{f}_{2} \in\mathrm{L}^{q}(\mathbb{R}^{d})}$. However the
computation of
$\smash{\widehat{f}_{2}}$ is not feasible in general.
Proposition~\ref{exp(-nI)2} says that the previous condition is satisfied if there
exists $p \in \, ]1,2]$ such that $f_{2} \in\mathrm{L}^{p}(\mathbb
{R}^{d})$ so that
the expansion is true. Let us take a look at this:
\begin{eqnarray*}
&&\int_{\mathbb{R}^{2}}\bigl|f_{2}(u,v)\bigr|^{p} \,du \,dv
\\
&& \qquad =\int_{\mathbb{R}^{2}}\frac{f^{p} ((u+\sqrt{2v-u^{2}})/2
) f^{p}
((u-\sqrt{2v-u^{2}})/2 )}{(2v-u^{2})^{p/2}} \mathbh {1}_{u^{2}<2v}
\,du\, dv.
\end{eqnarray*}
Let us make the change of variables given by
\[
(u,v) \longmapsto(x,y)=\tfrac{1}{2}\bigl(u+\sqrt{2v-u^{2}},u+
\sqrt {2v-u^{2}}\bigr),
\]
which is a $\mathrm{C}^{1}$-diffeomorphism from $\Delta_{2}$ to
$D^{+}$ (see the
proof of the previous lemma) with Jacobian in $(u,v)$, $2\sqrt
{2v-u^{2}}=2(y-x)>0$:
\[
\int_{\mathbb{R}^{2}}\bigl|f_{2}(u,v)\bigr|^{p}\,du\,dv=
\int_{\mathbb
{R}^{2}}\frac
{f^{p}(x)f^{p}(y)}{(y-x)^{p}} 2(y-x)\mathbh{1}_{y>x}
\,dx\,dy.
\]
By symmetry in $x$ and $y$, we get
\[
\int_{\mathbb{R}^{2}}\bigl|f_{2}(u,v)\bigr|^{p}\,du\,dv=
\int_{\mathbb{R}
^{2}}f^{p}(x)f^{p}(y)|y-x|^{1-p}
\,dx\,dy.
\]
Then we get the following proposition:

\begin{prop}\label{exp(-nI)dens}
Suppose that $\rho$ has a density $f$ with respect to the
Lebesgue measure on $\mathbb{R}$ such that, for some $p \in  \,]1,2]$,
\[
\int_{\mathbb{R}^2}f^{p}(x+y)f^{p}(y)|x|^{1-p}
\,dx\,dy<+\infty.
\]
Then, for $n$ large enough, $\tilde{\nu}_{n,\rho}$ has a density
$g_{n}$ with respect to the Lebesgue measure on $\mathbb{R}^{2}$ such
that, for
any compact subset $K_{I}$ of $A_{I}$, when $n$ goes to~$+\infty$,
uniformly over $(x,y)\in K_{I}$.
\[
g_{n}(x,y)\sim\frac{n}{2\pi} \bigl(\operatorname{det} \mathrm{D}_{(x,y)}^{2}I \bigr)^{1/2} e^{-nI(x,y)}.
\]
\end{prop}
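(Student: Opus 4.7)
The plan is to verify directly that the hypotheses of Theorem~\ref{exp(-nI)} are satisfied, and then to invoke the uniform expansion provided by part~(b) of that theorem. All the preliminary computations needed have already been assembled in the paragraphs just preceding the proposition, so the argument is essentially an assembly of three ingredients: the convolution density formula, the change-of-variables identity, and Proposition~\ref{exp(-nI)2}.

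First I would record that the measure $\nu_{\rho}$ (law of $(Z,Z^2)$ for $Z\sim\rho$) is non-degenerate, since its support lies on the parabola $\{(z,z^2):z\in\R\}$, which is not contained in any affine line of $\R^2$, and that $\Dro_{\L}\neq\varnothing$ because $\R\times\,]-\infty,0[\,\subset D_{\L}$. This ensures that the general framework of Section~\ref{CramerTransform} applies to $\nu_\rho$, so that $A_I=\nabla\L(\Dro_\L)$ is a non-empty open set on which $I$ is $\Ck{\infty}$.

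Next I would apply the preceding lemma to write the density $f_2$ of $\nu_\rho^{*2}$ explicitly, and then invoke the change-of-variables identity derived just above the proposition, namely
\[\int_{\R^{2}}|f_{2}(u,v)|^{p}\,du\,dv=\int_{\R^{2}}f^{p}(x)f^{p}(y)|y-x|^{1-p}\,dx\,dy.\]
Performing the substitution $x'=x$, $y'=y-x$ (linear with Jacobian $1$) turns the right-hand side into exactly the integral assumed finite in the hypothesis, so $f_2\in\Ll^{p}(\R^{2})$ for the given $p\in\,]1,2]$.

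Having established that $\nu_\rho^{*2}$ has a density in $\Ll^{p}$ with $p\in\,]1,2]$, I would apply Proposition~\ref{exp(-nI)2} with $m=2$ to conclude that there exists $n_0\ge1$ with $\widehat{\nu_\rho^{*n_0}}\in\Ll^{1}(\R^{2})$, i.e.\ the hypothesis of Theorem~\ref{exp(-nI)} is met. Part~(b) of that theorem then yields, for $n$ large enough, a density $g_n$ of $\widetilde{\nu}_{n,\rho}$ satisfying the claimed asymptotic uniformly over any compact $K_I\subset A_I$, which is precisely the conclusion of the proposition.

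There is essentially no obstacle: the only non-trivial point is the change of variables that identifies the $\Ll^p$-norm of $f_2$ with the integrability condition placed on $f$, and this has already been performed in the text immediately preceding the statement, so only the invocation of Proposition~\ref{exp(-nI)2} and Theorem~\ref{exp(-nI)} remains.
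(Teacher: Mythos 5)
Your proposal is correct and follows exactly the route the paper takes: the paper itself states that the proposition "follows from theorem~\ref{exp(-nI)}, proposition~\ref{exp(-nI)2} and the previous equality," and you assemble precisely those three ingredients (the explicit density $f_2$ of $\nu_\rho^{*2}$, the change-of-variables identity showing $\|f_2\|_{\Ll^p}^p=\int f^p(x)f^p(y)|y-x|^{1-p}\,dx\,dy$, and Proposition~\ref{exp(-nI)2} with $m=2$ feeding into Theorem~\ref{exp(-nI)}(b) with $d=2$). The preliminary checks you add (non-degeneracy of $\nu_\rho$ and $\Dro_{\L}\neq\varnothing$) are also the ones the paper verifies just before invoking the theorem.
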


Let us prove now Theorem~\ref{theoFluctuations}. Suppose that $\rho$
is a
probability measure on~$\mathbb{R}$ with an even density $f$ such that there
exist $v_{0}>0$ and $p \in \, ]1,2]$ such that
\[
\int_{\mathbb{R}}e^{v_{0}z^{2}}f(z)\,dz<+\infty\quad \mbox{and}\quad \int
_{\mathbb{R}^{2}}f^{p}(x+y)f^{p}(y)|x|^{1-p}
\,dx\,dy<+\infty.
\]
The first inequality implies that
\mbox{$\mathbb{R}\times\,]{-}\infty,v_{0}[\,\subset
D_{\Lambda}$} and thus $(0,0) \in\DD_{\Lambda}$. Moreover $\rho$ is symmetric
(since $f$ is even), and its support contains at least three points
(since $\rho$ has a density). Proposition~\ref{DLdeI-F} implies that
there exists $\delta>0$ such that
\renewcommand{\theequation}{*}
\begin{equation}
\label{DevI-F}
\forall(x,y) \in\mathrm{B}_{\delta} \qquad G(x,y)=I(x,y)-
\frac
{x^{2}}{2y} \geq \frac{(y-\sigma^{2})^{2}}{4(\mu_{4}-\sigma^{4})}+\frac{\mu
_{4}x^{4}}{24\sigma^{8}},
\end{equation}
where $\mu_{4}$ denotes the fourth moment of $\rho$ and $\mathrm
{B}_{\delta}$ the
open ball of radius $\delta$ centered at $(0,\sigma^{2})$. We can\vspace*{-1.5pt}
reduce $\delta$,
in order to have $\mathrm{B}_{\delta}\subset K_{I}$ where $K_{I}$ is
a compact
subset of $A_{I}$. Moreover
$A_{I} \subset\DD_{I} \subset
\Delta^{ *}$ thus $\mathrm{B}_{\delta}\cap\Delta^{
*}=\mathrm{B}_{\delta}$.

Let $n \in\mathbb{N}$, and let $f \dvtx\mathbb{R}\longrightarrow\mathbb
{R}$ be a bounded
continuous function. We have
\[
\mathbb{E}_{\tilde{\mu}_{n,\rho}} \biggl(f \biggl(\frac
{S_{n}}{n^{3/4}} \biggr) \biggr)=
\frac{1}{Z_{n}}\int_{\Delta^{ *}}f\bigl(xn^{1/4}\bigr)\exp
\biggl(\frac
{nx^{2}}{2y} \biggr)\,d\tilde{\nu}_{n,\rho}(x,y)=
\frac
{A_{n}+B_{n}}{Z_{n}},
\]
with
\begin{eqnarray*}
A_{n} &=& \int_{\mathrm{B}_{\delta}}f\bigl(xn^{1/4}\bigr)
\exp \biggl(\frac
{nx^{2}}{2y} \biggr) \,d\tilde{\nu}_{n,\rho}(x,y),
\\
B_{n} &=& \int_{\Delta^{ *}\cap\mathrm{B}^{c}_{\delta
}}f\bigl(xn^{1/4}\bigr)
\exp \biggl(\frac
{nx^{2}}{2y} \biggr)\,d\tilde{\nu}_{n,\rho}(x,y).
\end{eqnarray*}
In what follows, we introduce $e^{-nI(x,y)}$ in the expression of
$A_{n}$, in order to use Proposition~\ref{exp(-nI)dens}:
\[
A_{n}=n\int_{\mathrm{B}_{\delta}}f\bigl(xn^{1/4}
\bigr)e^{-nG(x,y)}H_{n}(x,y)\, dx\,dy,
\]
where we set $H_{n}(x,y)=e^{nI(x,y)}g_{n}(x,y)/n$. We define
\[
\mathrm{B}_{\delta,n}=\bigl\{ (x,y) \in\mathbb{R}^{2} :
x^{2}/\sqrt {n}+y^{2}/n \leq\delta^{2} \bigr\}.
\]
Let us make the change of variables given by $(x,y) \longmapsto
(xn^{-1/4},yn^{-1/2}+\sigma^{2})$, with Jacobian $n^{-3/4}$,
\begin{eqnarray*}
A_{n}&=& n^{1/4}\int_{\mathrm{B}_{\delta,n}}f(x)\exp
\biggl(-nG \biggl(\frac
{x}{n^{1/4}},\frac{y}{\sqrt{n}}+\sigma^{2}
\biggr) \biggr)
\\
&&\hspace*{42pt}{}\times H_{n} \biggl(\frac{x}{n^{1/4}},\frac{y}{\sqrt{n}}+\sigma
^{2} \biggr)\, dx\,dy.
\end{eqnarray*}
We check now that we can apply the dominated convergence theorem to
this integral. The uniform expansion of $g_{n}$ (see Proposition~\ref
{exp(-nI)dens}) means that for any $\alpha>0$, there exists $n_{0} \in
\mathbb{N}$
such that
\[
\forall(x,y) \in K_{I}, \forall n\geq n_{0}\qquad \bigl\llvert
H_{n}(x,y) 2\pi \bigl(\operatorname{det} \mathrm{D}_{(x,y)}^{2}I
\bigr)^{-1/2} -1\bigr\rrvert \leq\alpha.
\]
If $(x,y) \in\mathrm{B}_{\delta,n}$, then
$(x_{n},y_{n})=(xn^{-1/4},yn^{-1/2}+\sigma
^{2}) \in\mathrm{B}_{\delta} \subset K_{I}$. Thus for all $n\geq
n_{0}$ and
$(x,y) \in\mathrm{B}_{\delta,n}$,
\[
\biggl\llvert H_{n} \biggl(\frac{x}{n^{1/4}},\frac{y}{\sqrt{n}}+
\sigma ^{2} \biggr)2\pi \bigl(\operatorname{det} \mathrm{D}_{(x_{n},y_{n})}^{2}I
\bigr)^{-1/2} -1\biggr\rrvert \leq\alpha.
\]
Moreover $(x_{n},y_{n})$ goes to $(0,\sigma^{2})$. Thus by continuity,
\[
\bigl(\mathrm{D}_{(x_{n},y_{n})}^{2}I \bigr)^{-1/2} \mathop{\longrightarrow}_{n\to+\infty} \bigl(\mathrm {D}_{(0,\sigma^{2})}^{2}I
\bigr)^{-1/2} = \bigl(\mathrm{D}_{(0,0)}^{2}\Lambda
\bigr)^{1/2},
\]
whose determinant is equal to $\sqrt{\sigma^{2}(\mu_{4}-\sigma^{4})}$.
Therefore
\[
\mathbh{1}_{\mathrm{B}_{\delta,n}}(x,y)H_{n} \biggl(\frac
{x}{n^{1/4}},
\frac{y}{\sqrt{n}}+\sigma ^{2} \biggr)\mathop{\longrightarrow}_{n\to+\infty}
\bigl(4 \pi^{2} \sigma ^{2}\bigl(\mu_{4}-
\sigma^{4}\bigr) \bigr)^{-1/2}.
\]
The expansion of $G$ in the neighborhood of $(0,\sigma^{2})$ implies that
\[
\exp \biggl(-nG \biggl(\frac{x}{n^{1/4}},\frac{y}{\sqrt{n}}+\sigma
^{2} \biggr) \biggr) \mathop{\longrightarrow}_{n\to+\infty} \exp
\biggl(-\frac
{y^{2}}{2(\mu_{4}-\sigma^{4})}-\frac{\mu_{4}x^{4}}{12\sigma
^{8}} \biggr).
\]
Let us check that the integrand is dominated by an integrable function,
which is independent of $n$. The function
\[
(x,y) \longmapsto \bigl(\mathrm{D}_{(x,y)}^{2}I
\bigr)^{-1/2}
\]
is bounded on $\mathrm{B}_{\delta}$ by some $M_{\delta}>0$. The
uniform expansion of
$g_{n}$ implies that for all $(x,y) \in\mathrm{B}_{\delta}$,
$H_{n}(x,y)\leq C_{\delta
}$ for some constant $C_{\delta}>0$. Finally, the inequality~(\ref{DevI-F})
above yields
\begin{eqnarray*}
&& \mathbh{1}_{\mathrm{B}_{\delta,n}}(x,y)f(x)\exp \biggl(-nG \biggl(\frac{x}{n^{1/4}},
\frac
{y}{\sqrt{n}}+\sigma^{2} \biggr) \biggr)H_{n} \biggl(
\frac
{x}{n^{1/4}},\frac
{y}{\sqrt{n}}+\sigma^{2} \biggr)
\\
&& \qquad \leq\|f\|_{\infty}C_{\delta}\exp \biggl(-\frac{y^{2}}{4(\mu
_{4}-\sigma
^{4})}-
\frac{\mu_{4}x^{4}}{24\sigma^{8}} \biggr).
\end{eqnarray*}
The right term is an integrable function on $\mathbb{R}^{2}$; thus it follows
from the dominated convergence theorem that
\[
A_{n}\mathop{\sim}_{+\infty}n^{1/4}\int
_{\mathbb{R}^{2}}\frac
{f(x)}{\sqrt{2
\pi\sigma^{2}}\sqrt{2 \pi(\mu_{4}-\sigma^{4})}}\exp \biggl(-\frac
{y^{2}}{2(\mu
_{4}-\sigma^{4})}-
\frac{\mu_{4}x^{4}}{12\sigma^{8}} \biggr)\,dx \,dy.
\]
By Fubini's theorem, we get
\[
A_{n}\mathop{\sim}_{+\infty}\frac{n^{1/4}}{\sqrt{2\pi\sigma
^{2}}}\int
_{\mathbb{R}
}f(x)\exp \biggl(-\frac{\mu_{4}x^{4}}{12\sigma^{8}} \biggr)\, dx.
\]
Let us focus now on $B_{n}$. The distribution $\rho$ is symmetric, it has
a density and $(0,0)$ belongs to the interior of $D_{\Lambda}$; thus
Proposition~\ref{TypeVaradhan} implies that there exist $\varepsilon
>0$ and
$n_{0} \geq1$ such that for any $n \geq n_{0}$,
\[
\int_{\Delta^{ *}\cap\mathrm{B}^{c}_{\delta}}\exp \biggl(\frac
{nx^{2}}{2y} \biggr) \,d
\tilde{\nu}_{n,\rho}(x,y)\leq e^{-n\varepsilon},
\]
and thus $B_{n} \leq\|f\|_{\infty}e^{-n\varepsilon}$, so that
$B_{n}=o(n^{1/4})$. Therefore
\[
A_{n}+B_{n}\mathop{\sim}_{+\infty}\frac{n^{1/4}}{\sqrt{2\pi\sigma
^{2}}}
\int_{\mathbb{R}}f(x)\exp \biggl(-\frac{\mu
_{4}x^{4}}{12\sigma^{8}} \biggr)\, dx.
\]
Applying this to $f=1$, we get
\[
Z_{n}\mathop{\sim}_{+\infty}\frac{2n^{1/4}}{\sqrt{2\pi\sigma
^{2}}}\int
_{0}^{+\infty}\exp \biggl(-\frac{\mu_{4}x^{4}}{12\sigma^{8}} \biggr)
\,dx=\frac
{n^{1/4}}{\sqrt{2\pi\sigma^{2}}}\frac{1}{2} \biggl(\frac{12\sigma
^{8}}{\mu
_{4}}
\biggr)^{1/4}\Gamma \biggl(\frac{1}{4} \biggr),
\]
where we made the change of variables $y=\mu_{4}x^{4}/(12\sigma
^{8})$. Finally
\[
\mathbb{E}_{\tilde{\mu}_{n,\rho}} \biggl(f \biggl(\frac
{S_{n}}{n^{3/4}} \biggr) \biggr)
\mathop{\sim}_{+\infty} \biggl(\frac{4\mu_{4}}{3\sigma^{8}} \biggr)^{1/4}
\Gamma \biggl(\frac{1}{4} \biggr)^{-1} \int_{\mathbb{R}}f(x)
\exp \biggl(-\frac{\mu
_{4}x^{4}}{12\sigma^{8}} \biggr) \,dx.
\]
The ultimate change of variables $s=\mu_{4}^{1/4}x/\sigma^{2}$ gives us
Theorem~\ref{theoFluctuations}.

\begin{appendix}

\setcounter{theo}{0}
\renewcommand{\thetheo}{A.\arabic{theo}}
\section{General results on the Cram\'er transform}\label{appA}

We present here some general results on the Cram\'er transform of
distributions on $\mathbb{R}^{d}$.

A probability measure $\mathbb{R}$ is said to be degenerate if it is a Dirac
mass. The following definition generalizes this notion for measures on
$\mathbb{R}^{d}$:

\begin{defi}
A probability measure $\nu$ on $\mathbb{R}^{d}$, $d
\geq2$, is
said to be degenerate if its support is included in a hyperplane of
$\mathbb{R}
^{d}$; that is, there exists a hyperplane $\mathcal{H}$ of $\mathbb
{R}^{d}$ such that
$\nu(\mathcal{H})=1$.
\end{defi}

A first consequence of the nondegeneracy of $\nu$ is that its
covariance matrix is a symmetric positive definite matrix; see
Section~III.5 of~\cite{Feller2} for a proof.

From this point forward, we consider $\nu$ a nondegenerate probability
measure on $\mathbb{R}^{d}$. The log-Laplace $L$ of $\nu$ is defined
in $\mathbb{R}
^{d}$ by
\[
\forall\lambda\in\mathbb{R}^{d}\qquad  L(\lambda)=\ln\int
_{\mathbb{R}^{d}} e^{\langle\lambda,z
\rangle} \,d\nu(z),
\]
where $\langle \cdot,\cdot \rangle$ denotes the inner product in
$\mathbb{R}^{d}$.
It is a convex function on $\mathbb{R}^{d}$ which takes its values in
$]{-}\infty
,+\infty]$. The Fenchel--Legendre transform of $L$ is called the Cram\'
er transform of $\nu$ and is defined on $\mathbb{R}^d$ by
\[
\forall x \in\mathbb{R}^{d} \qquad J(x)=\sup_{\lambda\in\mathbb
{R}^{d}} \bigl(
\langle\lambda ,x\rangle-L(\lambda) \bigr).
\]
It is a nonnegative, convex\vspace*{-1.5pt} and lower semi-continuous function. We
denote by $D_{L}$ and $D_{J}$ the convex sets where $L$ and $J$ are
finite. Notice\vspace*{-1.5pt}that if $\DD_{L}$
is nonempty, then $L$ is $\mathrm{C}^{\infty }$ on $\DD_{L}$. We refer to Section~2.2~of~\cite{DZ},
Section~VII.5~of~\cite{Ellis} and Sections~25 and~26 of~\cite{Rockafellar} for the main results on $L$ and $J$. Cram\'er's theorem
(Theorem~\ref{Cramer} in the \hyperref[appA]{Appendix}) links~$J$ and the large
deviations of $S_{n}/n$ where $S_{n}$ is the sum of $n$ independent
random variables with common distribution $\nu$.

We are interested in the points $\lambda$ realizing the supremum defining
$J(x)$, for $x\in D_{J}$. We denote by $\mathcal{C}$ the closed convex
hull of
the support of $\nu$.

\begin{lem}\label{lemEnvConv}
Let\vspace*{-1.5pt} $\nu$ be a nondegenerate probability measure on
$\mathbb{R}
^{d}$. The interior of $\mathcal{C}$ is not empty and $\CC\subset D_{J}
\subset\mathcal{C}$. Moreover for any $x\in\CC$, the supremum defining
$J(x)$ is realized for some value $\lambda(x) \in D_{L}$.
\end{lem}

\begin{pf}
The nondegeneracy of $\nu$ means that its support is not
included in a hyperplane of $\mathbb{R}^{d}$. Therefore the support of
$\nu$
contains $d$ linearly\vspace*{-1.5pt} independent vectors, and the interior of the
convex hull of these vectors is nonempty. Thus $\CC$ is nonempty.

Suppose that $\mathcal{C}\neq\mathbb{R}^{d}$ (otherwise it is
immediate that $D_{J}
\subset\mathcal{C}$). Let $x \notin\mathcal{C}$. By the
Hahn--Banach theorem, there
exists $\lambda\in\mathbb{R}^{d}$ and $a \in\mathbb{R}$ such that
\[
\forall y \in\mathcal{C}\qquad \langle\lambda,y \rangle\leq a < \langle\lambda,x
\rangle.
\]
Since $\nu(\mathcal{C})=1$, Jensen's inequality implies that
\[
\forall t>0\qquad  J(x)\geq- \ln\int_{\mathcal{C}} \exp\bigl(t\langle
\lambda,y \rangle -t\langle\lambda,x \rangle\bigr) \,d\nu(y)\geq t\bigl(\langle
\lambda,x \rangle-a\bigr).
\]
Sending $t$ to $+\infty$, we conclude that $J(x)=+\infty$. Thus $D_{J}
\subset\mathcal{C}$.

Let $x \in\CC$, and
let $(\lambda_{n})_{n \in\mathbb{N}}$ be a sequence in $\mathbb{R}
^{d}$ such that
\begin{eqnarray*}
J(x)&=& \lim_{n \to+\infty} \biggl(\langle\lambda_{n},x
\rangle- \ln \int_{\mathbb{R}
^{d}}\exp\bigl(\langle\lambda_{n},z
\rangle\bigr) \,d\nu(z) \biggr)
\\
&=& -\ln \lim_{n \to
+\infty} \int_{\mathbb{R}^{d}}\exp\bigl(
\langle\lambda_{n},z-x\rangle \bigr) \,d\nu(z).
\end{eqnarray*}
We suppose that $|\lambda_{n}|$ goes to $+\infty$, and we show that
it leads
to a contradiction. For all $n \in\mathbb{N}$, we set $u_{n}=\lambda
_{n}|\lambda
_{n}|^{-1}$. Then $(u_{n})_{n \in\mathbb{N}}$ is a bounded sequence.
Thus, up
to the extraction of a subsequence, we might assume that it converges
to some vector $u \in\mathbb{R}^{d}$ whose norm is $1$. Let $v$
belong to the
support of $\nu$, and let $U$ be an open subset of $\mathbb{R}^{d}$ containing
$v$. We have then $\nu(U)>0$. Suppose that for any $z \in U$, $\langle
u,z-x \rangle>0$. Then, by Fatou's lemma,
\begin{eqnarray*}
+\infty&=& \int_{U}\liminf_{n \to+\infty}\exp
\bigl(|\lambda_{n}|\langle u_{n},z-x\rangle\bigr) \,d\nu (z)
\\
&\leq & \liminf_{n \to+\infty}\int_{U} \exp\bigl(|
\lambda_{n}|\langle u_{n},z-x\rangle\bigr) \,d\nu (z).
\end{eqnarray*}
Hence
\[
\exp\bigl(-J(x)\bigr)=\lim_{n \to+\infty}\int_{\mathbb{R}^{d}}
\exp \bigl(|\lambda_{n}|\langle u_{n},z-x\rangle\bigr) \,d
\nu(z)=+\infty.
\]
Thus $J(x)=- \infty$, which is absurd since $J$ is a nonnegative
function. We conclude that for all $v$ in the support of $\nu$ and for
any open subset $U$ of $\mathbb{R}^{d}$ containing $v$, there exists
$z \in U$
such that
$\langle u,z-x \rangle\leq0$. It follows that, for any $v$ in the
support of $\nu$, $\langle u,v \rangle\leq\langle u,x \rangle$. This
inequality is stable by convex combinations, thus
\[
\forall y \in\mathcal{C}\qquad \langle u,y \rangle\leq\langle u,x \rangle.
\]
Since $x \in\CC$,
there exists a ball $\mathrm{B}_{x}$ centered at $x$ and
contained in $\mathcal{C}$. Thus there exists $y_{0} \in\mathrm
{B}_{x}$ such that
$\langle u,y_{0} \rangle> \langle u,x \rangle$, which is absurd.
Therefore $(\lambda_{n})_{n \in\mathbb{N}}$ is a bounded sequence.
Hence there
exists a subsequence $(\lambda_{\phi(n)})_{n \in\mathbb{N}}$ and
$\lambda(x) \in\mathbb{R}^{d}$
such that $\lambda_{\phi(n)}$ goes to $\lambda(x)$. By Fatou's lemma,
\begin{eqnarray*}
J(x)&=&\bigl\langle\lambda(x),x \bigr\rangle-\ln\lim_{n \to+\infty}\int
_{\mathbb{R}^{d}}\exp \bigl(\langle\lambda_{n},z\rangle\bigr)
\,d\nu(z)
\\
& \leq & \bigl\langle\lambda(x),x \bigr\rangle-\ln\int_{\mathbb
{R}^{d}}
\liminf_{n \to+\infty}\exp\bigl(\langle\lambda _{n},z\rangle
\bigr) \,d\nu(z)
\\
& =& \bigl\langle\lambda(x),x \bigr\rangle-\ln\int_{\mathbb{R}^{d}}\exp
\bigl(\bigl\langle\lambda(x),z\bigr\rangle\bigr) \,d\nu(z)\leq J(x).
\end{eqnarray*}
Thus $J(x)=\langle\lambda(x),x \rangle-L(\lambda(x))$.\vspace*{-1.5pt} Since
$L(\lambda(x))\neq-\infty
$, this formula implies that $J(x)<+\infty$ and thus that $\CC\subset D_{J}$. Moreover if $L(\lambda(x))=+\infty$, then
$J(x)=-\infty$,
which is absurd. Therefore $L(\lambda(x))<\infty$. This shows that the
supremum defining $J(x)$ is realized at a point $\lambda(x)$ with\vspace*{-1.5pt}
$\Lambda(\lambda
(x))<+\infty$.
\end{pf}

If $D_{L}$ is an open\vspace*{-1.5pt} subset of $\mathbb{R}^{d}$, then for all $(x,y)
\in\DD
_{J}=\CC$, the supremum
defining $J(x)$ is realized at some $\lambda(x)
\in\DD_{L}$. This is the case
when the support of $\nu$ is bounded,
and also for the distribution\vspace*{1pt} $\nu_{\rho}$ when $\rho$ is the
Gaussian $\mathcal{N}
(0,\sigma^{2})$, where we have then $D_{L}=\mathbb{R} \times\,
]{-}\infty,1/(2\sigma^{2})[$.

Now we study the smoothness of $J$.

\begin{nota*}
If $f$ is a differentiable function on an open subset
$U$ of $\mathbb{R}^{d}$, we denote by $\mathrm{D}_{x}f$ the
differential of $f$
at $x \in U$. If $f$ is real-valued, we denote:
\begin{longlist}[$\star$]
\item[$\star$]  $\mathrm{D}^{2}_{x}f$ its second differential at $x\in U$
(considered as a matrix of size $d\times d$).

\item[$\star$]  $\nabla f$ the function $U \longrightarrow\mathbb{R}^{d}$
such that
\[
\forall x \in U, \forall y\in\mathbb{R}^{d} \qquad \bigl\langle\nabla f(x),y
\bigr\rangle=\mathrm{D}_{x}f(y).
\]
\end{longlist}
\end{nota*}

We define the admissible domain of $J$:

\begin{defi}\label{adm}
Let $\nu$ be a nondegenerate probability measure on
$\mathbb{R}
^{d}$ such that the interior of $D_{L}$ is nonempty. The\vspace*{-1pt} admissible
domain of $J$ is the set $A_{J}=\nabla L (\DD_{L})$.
\end{defi}

The following proposition states that $A_{J}$, the admissible domain of
$J$, is an open subset of $\mathbb{R}^{d}$, and that $J$ is $\mathrm
{C}^{\infty}$ on $A_{J}$.

\begin{prop}\label{Dadmissible}
Let $\nu$ be a nondegenerate probability measure on
$\mathbb{R}^{d}$ such that the interior of $D_{L}$ is nonempty. Let $A_{J}$ be
the admissible domain of $J$. We have:
\begin{longlist}[(a)]
\item[(a)]  The function $\nabla L$ is a $\mathrm{C}^{\infty }$-diffeomorphism from $\DD_{L}$ to $A_{J}$. Moreover
\[
A_{J} \subset D_{J}=\bigl\{ x \in\mathbb{R}^{d}\dvtx J(x)<+
\infty \bigr\}.
\]

\item[(b)]  Denote by $\lambda$ the inverse $\mathrm{C}^{\infty }$-diffeomorphism of $\nabla L$. Then the function $J$
is~$\mathrm{C}^{\infty}$
on $A_{J}$ and for any $x \in A_{J}$,
\begin{eqnarray*}
J(x)&=& \bigl\langle x,\lambda(x)\bigr\rangle-L\bigl(\lambda(x)\bigr),
\\
\nabla J(x) &=& (\nabla L)^{-1}(x) = \lambda(x)\quad \mbox{and}\quad \mathrm
{D}^{2}_{x}J= \bigl(\mathrm{D}^{2}_{\lambda(x)}L
\bigr)^{-1}.
\end{eqnarray*}

\item[(c)] If $D_{L}$ is an open subset of $\mathbb
{R}^{d}$, then
$A_{J}=\DD_{J}=\CC$ where $\mathcal{C}$ denotes the
convex hull of the
support of $\nu$.
\end{longlist}
\end{prop}

\begin{pf}
The points (a) and (b)  are proved in
Section~2 of~\cite{Baldi}, Section~1.5 of~\cite{Borovkov} and
Section~26 of~\cite{Rockafellar}; see also Section~VIII.4 of~\cite{Ellis} in the case where $D_{L}=\mathbb{R}^d$. Let us prove point
(c). If
$D_{L}$ is an open subset of $\mathbb{R}^{d}$, then Lemma~\ref{lemEnvConv}
implies that for $x\in\CC=\DD_{J}$, the supremum\vspace*{-1.5pt}
defining $J(x)$ is
realized at some point $\lambda(x) \in D_{L}=\DD_{L}$. The function $L$ is
differentiable at $\lambda(x)$, and point (b) yields that
\[
x=\nabla L\bigl(\lambda(x)\bigr) \in\Lambda(\DD_{L})=A_{J}.
\]
Thus $\DD_{J} \subset A_{J}$.
Finally $A_{J}\subset D_{J}$, and
$A_{J}$ is open; thus $A_{J}=\DD_{J}=\CC$. This
proves (c).
\end{pf}

Let $\nu$ be a probability distribution on $\mathbb{R}^{d}$ having a density
with respect to the Lebesgue measure, and let $S_{n}$ be the sum of $n$
independent and identically distributed random variables with
distribution $\nu$. The following theorem states that, under some
hypothesis allowing the Fourier inversion, the density of the
distribution of $S_{n}/n$ is asymptotically a function of $J$, the
Cram\'er transform of $\nu$. We refer to Section~3 of the article of
Andriani and Baldi~\cite{Baldi} for a proof.

\begin{theo}\label{exp(-nI)}
Let\vspace*{-1.5pt} $\nu$ be a nondegenerate probability measure on
$\mathbb{R}^{d}$.
We denote by $L$ its log-Laplace and by $J$ its Cram\'er
transform. Suppose that $\DD_{L}\neq\varnothing$ and that there
exists $n_{0} \geq1$ such that
\[
\widehat{\nu^{*n_{0}}} \in\mathrm{L}^{1}\bigl(
\mathbb{R}^{d}\bigr).
\]
We denote by $A_{J}$ the admissible domain of $J$. Let $(X_{n})_{n \geq
1}$ be a sequence of independent and identically distributed random
variables with distribution~$\nu$. For any $n\geq n_{0}$, the random
variable $\overline{X}_{n}=(X_{1}+\cdots+X_{n})/n$ has a density~$g_{n}$
with respect to the Lebesgue measure on $\mathbb{R}^{d}$. If $K_{J}$
is a
compact subset of $A_{J}$, then uniformly over $x\in K_{J}$, when $n$
goes to $+\infty$,
\[
g_{n}(x)\sim \biggl(\frac{n}{2\pi} \biggr)^{d/2} \bigl(
\operatorname{det} \mathrm{D}_{x}^{2}J \bigr)^{1/2}
e^{-nJ(x)}.
\]
\end{theo}

\begin{prop}\label{exp(-nI)2}
Let $\nu$ be\vspace*{-1.5pt} a nondegenerate probability measure on
$\mathbb{R}
^{d}$ such that $\DD_{L}\neq
\varnothing$. If there exists $m \in\mathbb{N}$
and $p \in  \,]1,2]$ such that $\nu^{*m}$ has a density $f_{m} \in
\mathrm{L}
^{p}(\mathbb{R}^{d})$, then the hypotheses of Theorem~\ref{exp(-nI)}
are verified.
\end{prop}

\begin{pf}
The Hausdorff--Young inequality (see
Theorem~1.2.1 of~\cite{BL}) implies that $\smash{\widehat{f}_{m} \in
\mathrm{L}
^{r}(\mathbb{R}^{d})}$, with $r=p/(p-1)$. Moreover $\smash{\widehat
{f}_{m}}$ is
bounded, so $\smash{\widehat{f}_{m} \in\mathrm{L}^{q}(\mathbb
{R}^{d})}$, where $q$ is
a positive integer larger than $r$. Therefore
\[
\widehat{\nu^{*mq}}= \bigl(\widehat{\nu^{*m}}
\bigr)^{q}= (\widehat {f}_{m} )^{q}\in
\mathrm{L}^{1}\bigl(\mathbb{R}^{d}\bigr).
\]
Hence the hypotheses of the theorem are verified with $n_{0}=mq$.
\end{pf}

\section{Some results on large deviations}\label{appB}
\renewcommand{\thetheo}{B.\arabic{theo}}
\setcounter{theo}{0}

Let $(\mathcal{X},\mathcal{B})$ be a topological space. We refer to the
Section~1.2 of~\cite{DZ} for the two following definitions:

\begin{defi}
A rate function on $\mathcal{X}$ is a nonnegative map $J$
defined on~$\mathcal{X}$ and which is lower semi-continuous; that is,
for any
$\alpha>0$, the level set $\{ x \in\mathcal{X}\dvtx  J(x) \leq\alpha
\}$ is a
closed subset of $\mathcal{X}$. A good rate function is a rate
function for
which all these level sets are compact sets of $\mathcal{X}$.
\end{defi}

\begin{defi}
A sequence $(\mu_n)_{n\geq1}$ of probability measures
on $\mathcal{X}$ satisfies a large deviation principle with speed $n$
and which
is governed by the rate function~$J$ if, for any $A \in\mathcal{B}$,
\begin{eqnarray*}
-\inf \bigl\{ J(x)\dvtx x \in\AAA \bigr\} &\leq & \liminf
_{n \to+\infty}\frac{1}{n}\ln\mu_n(A)
\\
& \leq & \limsup_{n \to+\infty}\frac{1}{n}\ln\mu_n(A)
\leq-\inf \bigl\{ J(x)\dvtx x \in \overline{A} \bigr\}.
\end{eqnarray*}
\end{defi}

The following lemma is a variant of the upper bound of Varadhan's
lemma; see Lemma~4.3.6 of~\cite{DZ}.

\begin{lem}\label{VaradhanUpper}
Let $\mathcal{X}$ be a regular topological space endowed
with its
Borel $\sigma$-field $\mathcal{B}$. Let $(\nu_{n})_{n \geq1}$ be a
sequence of
probability measures defined on $(\mathcal{X},\mathcal{B})$ which
satisfies a large
deviation principle with speed $n$, governed by the good rate function
$J$. For any bounded continuous function $f \dvtx \mathcal{X}\longrightarrow\mathbb{R}$,
we have for any closed subset $A$ of $\mathcal{X}$,
\[
\limsup_{n \to+\infty}\frac{1}{n} \ln\int_{A}
e^{nf(x)} \,d\nu_{n}(x)\leq\sup_{x
\in A}
\bigl(f(x)-J(x)\bigr).
\]
\end{lem}

We end this \hyperref[appA]{Appendix} with the Cram\'er theorem in $\mathbb{R}^d$ (see
Theorem~2.2.30 of~\cite{DZ}):

\begin{theo}[(Cram\'er)]\label{Cramer}
Let $\nu$ be a probability measure on $\mathbb{R}^d$,
$d\geq1$. We denote by $L$ its log-Laplace and by $J$ its Cram\'er
transform. Let $(X_n)_{n\geq1}$ be a sequence of independent random
variables with common law $\nu$. We define
\[
\forall n\geq1 \qquad S_n=X_1+\cdots+X_n.
\]
If $L$ is finite in the neighborhood of~$0$, then $J$ is a good rate
function, and the sequence of the laws of $S_n/n$, $n\geq1$ satisfies
the large deviation principle with speed $n$ and governed by $J$.
\end{theo}
\end{appendix}

\section*{Acknowledgments}
We thank two anonymous referees for their comments which helped to
improve the presentation of the paper.

%

%




\printaddresses
\end{document}